\title[] {On fluctuations of traces of large matrices over a non-commutative algebra}
\author{Mihai Popa and Yong Jiao}
\address{
Department of Mathematics and Statistics, Queen's University, 
Jeffery Hall, Kingston, Ontario, K7L 3N6, Canada, and
\newline
Institute of Mathematics �Simion Stoilow� of the Romanian Academy, P.O. Box 1-764,
Bucharest, RO-70700, Romania}
\email{popa@mast.queensu.ca}
\address{
Institute of Probability and Statistics, Central South University, Changsha 410075, China}
\email{jiaoyong@csu.edu.cn}
\thanks{This work was supported by the National Natural Science Foundation of China (grant 11001273), and by the  Research Fund for International
Young Scientists of NSFC (grant 11150110456)}
\newtheorem{claim}{}[section]
\newtheorem{defn}[claim]{Definition}
\newtheorem{thm}[claim]{Theorem}
\newtheorem{lemma}[claim]{Lemma}
\newtheorem{remark}[claim]{Remark}
\newtheorem{prop}[claim]{Proposition}
\newcommand{\cA}{\mathcal{A}}
\newcommand{\cH}{\mathcal{H}}
\newcommand{\cB}{\mathcal{B}}
\newcommand{\cE}{\mathcal{E}}
\newcommand{\lra}{\longrightarrow}
\newcommand{\Tr}{\text{Tr}}
\newcommand{\tr}{\text{tr}}
\newcommand{\wpi}{\widehat{\pi}}
\begin{document}

\maketitle

\begin{abstract}
 The paper investigates the asymptotic behavior of (non-normalized) traces of certain classes of  matrices 
with non-commutative  random variables as entries. We show that, unlike in the commutative framework,
the asymptotic behavior of matrices with free circular, respectively with Bernoulli distributed Boolean independent entries
is described in terms of free, respectively Boolean cumulants. We also present an exemple of relation of monotone independence arising from the study of Boolean independence.
\end{abstract}

%%%%%%%%%%%%%%%%%%%%%%%%%%%%%%%%%%%%%%%%%%%%%%%%%%%%%%%%%%%%%%%%%%%%%%%%%%%5
%%%%%%%%%%%%%%%%%%%%%%%%%%%%%%%%%%%%%%%%%%%%%%%%%%%%%%%%%%%%%%%%%%%%%%%%%%%%%%%%%%
%%%%%%%%%%%%%%%%%%%%%%%%%%%%%%%%%%%%%%%%%%%%%%%%%%%%%%%%%%%%%%%%%%%%
%%%%%%%%%%%%%%%%%%%%%%%%%%%%%%%%%%%%%%%%%%%%%%%%%%%%%%%%%%%%%%%%%%%%%%%%%%
\section{Introduction}

The fluctuations of traces of various classes of random matrices have been studied in the last two decades
in both physics (see, for example \cite{bs}, \cite{fmp}, \cite{kkp}) and mathematics (see \cite{mingo-speicher}, \cite{mss} \cite{mingo-p-r})  literature. Extensive works (such as \cite{vdn}, \cite{speichernica}) indicate that free independence is best suited to describe the interaction of important classes of independent ensembles of random matrices  with respect to normalized traces.  It was shown that free independence and the corresponding Central Limit Theorem laws  (centered semicircular distributions) behave in a very regular manner when tensoring with algebras of complex matrices (\cite{speicher-hab}). 
In order to address interactions of independent ensembles of random matrices with respect to  \emph{unnormalized} traces
 (fluctuation moments, higher order trace-moments), recent works, such as \cite{mingo-speicher} and \cite{mss}, introduced the notion of second order freeness or the more refined real second order  freeness (\cite{mingo-p-r}). The  present paper comes as an addendum to these works, more in the spirit of \cite{oyvind}. More  precisely, while  \cite{mingo-speicher}, \cite{mss} and \cite{mingo-p-r} study the behavior of important classes of random matrices  with entries in a commutative algebra, we present some similar results for the case when the entries are not commuting.

The results presented here bring  contributing evidence to the special nature  of second order  independence relations. We show that although ensembles of self-adjoint Gaussian random matrices can be well approximated at first order level
by ensembles of matrices with free semicircular entries, the second order  behavior of these two classes is different.
Also, classical cumulants are well-suited to describe higher order independence relations of ensembles of random matrices with commuting, independent entries; the results from Section \ref{section:free}, respectively Section \ref{section:boolean}, seem to  indicate that free, respectively Boolean cumulants   are appropriate  to describe higher order independence relations of ensembles of random matrices with free, respectively Boolean independent entries.

In what follows, the paper is organized in 3 sections. Section 2 contains some preliminary notions and results on permutations and partitions of an ordered set and non-commutative notions of independence. 

 Section \ref{section:free} presents some results in the  study of higher order behavior of ensembles of self-adjoint matrices with free circular entries. We first show (see Theorem \ref{thm:1}) that the free cumulants of unnormalized traces of such ensembles have a very similar behavior to the results presented in \cite{mss} concerning classical cumulants of ensembles of random matrices with independent Gaussian entries. We also show that,  in this framework, a substitute for second order freeness  from \cite{mingo-speicher} is  Property ($\ast$) that we define  in Section \ref{section:ast}.

The shorter Section \ref{section:boolean} presents  some results concerning Boolean independence. This non-unital notion of non-commutative independence (\cite{sbg}) is by far less studied than freeness, but it was shown to be of relevance in some problems from Theoretical Physics (\cite{wvw}), Free Probability (\cite{mp-jcw}), completely positive maps (\cite{mp-vv}) or Real Analysis (\cite{b}). We show that the Boolean cumulants of traces of ensembles of self-adjoint matrices with Bernoulli distributed 
boolean independent entries and constant matrices have a similar behavior to the classical cumulants of traces of Gaussian ensembles, as presented in \cite{mss}, respectively to the free cumulants of traces of semicircular ensembles, as presented in  Section \ref{section:free}. In addition, Theorem \ref{thm:monotone} presents a new exemple of monotone independence relation, here arising from the relations between constant matrices and matrices with Bernoulli distributed Boolean independent entries.

\section{Preliminaries}
\subsection{Partitions on an ordered set}
 For a positive integer $ n $, we will denote by $ [ n ] $ the ordered set $ \{ 1, 2, \dots, n \}$. 
By a \emph{partition}  $ \pi $ on $ [n ] $ we will understand a family 
$ B_1, B_2, \dots, B_{ q(\pi) } $ 
of pairwise disjoint nonvoid subsets of $ [ n ] $, called \emph{blocks} of $ \pi $, such that 
$ \cup_{l=1}^{ q( \pi ) } B_l = [ n ] $. If each block of $ \pi $ has exactly 2 elements, then $ \pi $ will be called a \emph{pairing}.  
The set of all partitions, respetively pairings on $ [ n ] $ will be denoted by 
$ P(n) $, respectively by $ P_2(n) $. 

The set $ P(n ) $ is a lattice under the partial order relation $ < $,  given by $ \sigma < \pi $ if any block of 
$ \sigma $ is contained in some block of $ \pi $. The maximal element of the lattice is $ 1_n $, the partition with a single block. For $ \pi, \sigma \in P(n) $, define
\[
\sigma \vee \pi = \inf \{ \tau: \ \tau > \pi,\ \tau > \sigma \}.
\]

A partition $ \pi \in P(n) $ will be called \emph{non-crossing} if for any $ B ,  D $ disjoint blocks of $ \pi $, there exists no 4-tuple $ i< j <k < l $ from $ [n] $ such that $ i,k \in B $ and $ j, l \in D $. The sets of all non-crossing partitions, respectively non-crossing pair-partitions of $ [ n ] $ 
will be denoted by $ NC(n) $, respectively $ NC_2(n) $.

A partition $ \pi \in P(n) $ will be called \emph{interval partition} if each block of $ \pi $ contains only consecutive elements from $ [ n ] $. We will denote the set of all interval partitions, respectively pairings of $ [ n ] $ by $ I(n) $, respectively $ I_2(n) $. Note that if $ n $  is odd, then $ I_2(n) = \emptyset $; if $ n $ is even then $ I_2(n) $ has only one element, namely the partition of blocks $\{ ( 2 k - 1 , 2 k )  :  1 \leq k \leq \frac{n}{2} \} $.

A permutation $ \gamma\in  S_n $ (the Symmetric group of order $ n $) will be uniquelly identified with a partition
on $ [ n ] $ by taking the blocks to equal (as sets) the cycles. A pair partition $ \pi \in P_2 (n) $ can be uniquelly identified with a permutation from $ S_n $ by taking the cycles to equal the blocks of $ \pi $.  The following result connecting partitions and permutations was proved in \cite{mingo-nica} (see \cite{mingo-nica}, relation 2.9):
\begin{prop}\label{part-perm}
If $ \tau, \sigma \in S_n $ , then
\[ 
\#( \tau ) + \#(\tau^{ -1} \sigma ) + \# ( \sigma ) \leq n+ 2 \#( \tau \vee \sigma )
\]
where in the left hand side of the equation $ \tau, \sigma $ are seen as permutations and in the 
right hand side as partitions.
\end{prop}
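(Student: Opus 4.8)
The plan is to rephrase the inequality in terms of the cycle-counting functional and to argue by induction on the distance of $\sigma$ from the identity. Writing $\#(\pi)$ for the number of cycles of $\pi\in S_n$, I would first recall the elementary \emph{transposition lemma}: for any transposition $(i\,j)$, the product $\pi\cdot(i\,j)$ has $\#(\pi)+1$ cycles when $i,j$ lie in the same cycle of $\pi$ (that cycle splits, and $i,j$ end up in different cycles), and $\#(\pi)-1$ cycles when they lie in different cycles (the two cycles merge). I also record that, viewing $\tau,\sigma$ as partitions, the join $\tau\vee\sigma$ is the partition whose blocks are the orbits of the subgroup $\langle\tau,\sigma\rangle$ acting on $[n]$; in particular $\tau\vee\sigma$ is monotone with respect to the refinement order in each argument, and if $i,j$ lie in a common cycle of \emph{some} element of $\langle\tau,\sigma\rangle$, then they lie in the same block of $\tau\vee\sigma$.

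Fix $\tau$ and set $F(\sigma):=n+2\#(\tau\vee\sigma)-\#(\tau)-\#(\tau^{-1}\sigma)-\#(\sigma)$; the claim is that $F(\sigma)\geq 0$, which I would prove by induction on $|\sigma|:=n-\#(\sigma)$. For the base case $\sigma=\mathrm{Id}$ one has $\tau\vee\sigma=\tau$ as partitions, $\#(\tau^{-1}\sigma)=\#(\tau)$ and $\#(\sigma)=n$, so $F(\mathrm{Id})=0$ and the inequality holds with equality.

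For the inductive step, assuming $|\sigma|\geq 1$, I choose $i,j$ in the same cycle of $\sigma$ and put $\sigma_0=\sigma\,(i\,j)$, so that $\#(\sigma_0)=\#(\sigma)+1$, $|\sigma_0|=|\sigma|-1$, and $i,j$ sit in different cycles of $\sigma_0$; by induction $F(\sigma_0)\geq 0$. I then track the three increments produced by replacing $\sigma_0$ with $\sigma=\sigma_0(i\,j)$. First, $\#(\sigma)-\#(\sigma_0)=-1$. Second, since $\tau^{-1}\sigma=(\tau^{-1}\sigma_0)(i\,j)$, the transposition lemma gives $\#(\tau^{-1}\sigma)-\#(\tau^{-1}\sigma_0)=\varepsilon$, with $\varepsilon=+1$ exactly when $i,j$ lie in the same cycle of $\tau^{-1}\sigma_0$. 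Third, because passing from $\sigma_0$ to $\sigma$ merges precisely the two cycles of $\sigma_0$ containing $i$ and $j$, the partition $\sigma$ is coarser than $\sigma_0$, so $\tau\vee\sigma\geq\tau\vee\sigma_0$ and $\#(\tau\vee\sigma)-\#(\tau\vee\sigma_0)=\delta\in\{-1,0\}$, with $\delta=-1$ exactly when $i,j$ lie in different blocks of $\tau\vee\sigma_0$. Collecting these yields $F(\sigma)=F(\sigma_0)+2\delta-\varepsilon+1$.

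It then remains to verify $2\delta-\varepsilon+1\geq 0$ in every case, and here lies the only real obstacle: a priori the combination $\delta=-1,\varepsilon=+1$ gives $-2$ and could destroy the bound. This combination is impossible, which is the heart of the argument: if $\varepsilon=+1$ then $i,j$ lie in a common cycle of $\tau^{-1}\sigma_0\in\langle\tau,\sigma_0\rangle$, hence in the same orbit of $\langle\tau,\sigma_0\rangle$, i.e. in the same block of $\tau\vee\sigma_0$, forcing $\delta=0$. Thus $\varepsilon=+1$ excludes $\delta=-1$, and the three surviving cases give $2\delta-\varepsilon+1\in\{0,2\}$, so $F(\sigma)\geq F(\sigma_0)\geq 0$ and the induction closes. (Alternatively one could first reduce to the transitive case by splitting along the orbits of $\langle\tau,\sigma\rangle$, where both sides are additive, and then invoke Euler's formula, i.e. non-negativity of the genus of the map associated with $(\tau,\tau^{-1}\sigma)$; the inductive argument above has the advantage of requiring no topological input.)
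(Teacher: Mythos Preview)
Your argument is correct. The base case and the bookkeeping of the increments $\delta$ and $\varepsilon$ are accurate, and the crucial observation ruling out $(\delta,\varepsilon)=(-1,+1)$---that a common cycle of $\tau^{-1}\sigma_0$ forces $i,j$ into the same orbit of $\langle\tau,\sigma_0\rangle$, hence into the same block of $\tau\vee\sigma_0$---is exactly what closes the induction.

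As for comparison: the paper does not prove this proposition at all; it quotes the inequality as relation~2.9 of Mingo--Nica and moves on. In that reference the statement is obtained by first reducing to the transitive case (splitting along the orbits of $\langle\tau,\sigma\rangle$, where both sides are additive) and then invoking the non-negativity of the genus associated with the pair $(\tau,\tau^{-1}\sigma)$---precisely the alternative route you mention in your closing parenthesis. Your inductive proof is thus a genuinely different, purely combinatorial derivation: it avoids any topological interpretation, at the cost of having to identify and exclude the one ``bad'' combination of signs. Either approach is fine here, since the paper only needs the inequality as a black box in the proof of Theorem~\ref{thm:1}.
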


For $ \sigma \in S_m $ and $ A_1, \dots, A_m $ some $ N \times  N $ complex matrices, we will define $ \Tr_\sigma( A_1, \dots, A_m) $ as follows. If $ \sigma $ has the cycle decomposition 
\[
 \sigma = \prod_{ q =1 }^ n \left( i( q, 1), i( q , 2 ) , \dots, i( q, l( q ) ) \right),
\]
 then we define
\[
 \Tr_\sigma( A_1, \dots, A_m)  = \prod_{ q =1 }^n  \Tr( A_{ i( q, 1) } \cdot A_{  i( q , 2 ) } \cdots A_{i( q, l( q ) ) }.
\]

If $ \overrightarrow{j} = ( j_1, \dots, j_m ) $ is a multitindex and $ \sigma \in S_m $, we will write that $ \overrightarrow{j}= \overrightarrow{j} \circ \sigma $  if $ \sigma(k) = l $ implies $ j_k = j_l $. 

 We will use the following version of the Lemma 5 from \cite{mingo-p-r}:
\begin{lemma}\label{lemma:2}
Suppose that $ A_k $  are $ N \times N $  complex matrices with entries $ a^{( k )  }_{ i, j } $ , where $ 1 \leq k \leq m $.
If $ \pi\in P_2 (m ) $ and $ \sigma \in S_m $ are such that $ k + \pi\sigma(k) \equiv 1 $ (mod 2) for all 
$ k \in \{ 1, \dots, m \} $, then there exists some $ \tau \in S_m $ such that
\[
\sum_{ \overrightarrow{ j } = \overrightarrow{j} \circ \pi\sigma } 
a^{( 1)}_{ j_{ \sigma ( 1) } j_{ \sigma( 2 ) } }
\cdots
a^{ ( m )}_{ j_{ \sigma ( 2m - 1 ) } j_{ \sigma ( 2 m ) } } = \Tr_{ \tau }( A_1, \dots, A_m ),
\]
and if $ ( i_1, i_2, \dots, i_q ) $ is a cycle of $ \tau $, then $ \pi\sigma ( i_v + 1 ) = i_{ v+ 1 } $.
\end{lemma}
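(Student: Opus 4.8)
The plan is to evaluate the constrained sum by hand and to read the permutation $\tau$ directly off the pattern of index identifications. The starting observation is that the condition $\overrightarrow{j}=\overrightarrow{j}\circ\pi\sigma$ means precisely that the multiindex $\overrightarrow{j}$ is constant along the cycles of $\pi\sigma$, so that the summation effectively runs over a single free index in $\{1,\dots,N\}$ for each cycle of $\pi\sigma$. In the generic factor $a^{(k)}_{j_{\sigma(2k-1)},\,j_{\sigma(2k)}}$ I will call $\sigma(2k-1)$ the row-position and $\sigma(2k)$ the column-position of $A_k$; since $\sigma$ is a bijection and the sets $\{2k-1,2k\}$ partition $\{1,\dots,2m\}$, every position is the row-position or the column-position of exactly one matrix.

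First I would exploit the parity hypothesis to determine which positions get identified by the relation $\overrightarrow{j}=\overrightarrow{j}\circ\pi\sigma$. Since $k+\pi\sigma(k)$ is odd for every $k$, the permutation $\pi\sigma$ reverses parity; the heart of the argument is to turn this into the statement that $\pi\sigma$ carries column-positions to row-positions and row-positions to column-positions, so that the only identifications forced among the indices are of the form ``the column index of one matrix equals the row index of another.'' Granting this, for each $k$ I let $\tau(k)$ be the matrix whose row-position is paired by $\pi\sigma$ with the column-position of $A_k$; the parity property is exactly what guarantees that this assignment is defined and that $k\mapsto\tau(k)$ is a bijection of $[m]$, because $\pi\sigma$ then restricts to a bijection from the column-positions onto the row-positions.

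With the matching in hand the computation finishes by bookkeeping modeled on the definition of $\Tr_\sigma$. Along a cycle $(i_1,\dots,i_q)$ of $\tau$ the column index of $A_{i_v}$ is identified with the row index of $A_{i_{v+1}}$ while indices belonging to different cycles remain independent, so summation over the shared indices telescopes $A_{i_1}\cdots A_{i_q}$ into the single trace $\Tr(A_{i_1}\cdots A_{i_q})$. Forming the product over all cycles of $\tau$ then produces exactly $\Tr_\tau(A_1,\dots,A_m)$, and the pairing relation ``column-position of $A_{i_v}$ $\mapsto$ row-position of $A_{i_{v+1}}$'' used to build $\tau$ is, after translating positions into the paper's indexing, the asserted identity $\pi\sigma(i_v+1)=i_{v+1}$.

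I expect the genuine obstacle to be the middle step: showing that the parity hypothesis really does make the identifications a perfect matching of column-positions with row-positions, leaving no identification between two row-positions or between two column-positions. This is exactly what prevents the sum from collapsing to a non-trace expression, and it is where both the parity condition and the pairing structure of $\pi$ must enter in an essential way. I would prove it either by a parity count on the individual cycles of $\pi\sigma$, or, more robustly, by induction on $m$: take the block of $\pi$ meeting the column-position of a fixed matrix, use parity to identify its partner as the row-position of a uniquely determined matrix, splice those two matrices into one, and thereby reduce to a pairing on fewer points while preserving the chaining relation that defines $\tau$.
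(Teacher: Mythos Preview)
The paper does not prove this lemma: it is introduced with ``We will use the following version of the Lemma 5 from \cite{mingo-p-r}'' and then used as a black box, so there is no in-paper argument to compare your proposal against.

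Your overall plan --- interpret the constraint as forcing $\overrightarrow{j}$ to be constant on the cycles of $\pi\sigma$, match the column index of each $A_k$ to the row index of some $A_{\tau(k)}$, and recognise the resulting sum as $\Tr_\tau$ --- is exactly how such identities are established, and it is what one finds in the cited reference.

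A warning about the step you yourself single out as the crux. The statement as printed here is visibly corrupted (for instance $\pi\in P_2(m)$ and $\sigma\in S_m$ are declared while $\sigma(2m)$ appears in the display, and the parity condition is stated only for $k\le m$). If one makes the obvious repair of reading $2m$ for $m$ throughout, the parity hypothesis on $\pi\sigma$ does \emph{not} by itself force $\pi\sigma$ to carry column-positions $\sigma(2k)$ to row-positions $\sigma(2l-1)$: take $m=3$, $\sigma=(1\,5\,4\,3\,2)\in S_6$ and $\pi=(1\,3)(2\,4)(5\,6)$; then $\pi\sigma=(1\,6\,5\,2\,3\,4)$ reverses parity at every point, yet the constrained sum collapses to $\sum_i (A_1)_{ii}(A_2)_{ii}(A_3)_{ii}$, which is not $\Tr_\tau$ for any $\tau\in S_3$. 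So neither of your two suggested attacks on that step can succeed from this hypothesis alone. In the correct formulation from \cite{mingo-p-r} the hypothesis is arranged precisely so that the row/column matching is immediate, and the remainder of your argument then goes through without any induction; I would work directly from the original lemma rather than from the transcription here.
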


\subsection{Non-commutative probability spaces and independence relations.}
Following \cite{speichernica}, by a non-commutative C$^\ast$-probability space we will understand a couple $(\cA, \phi)$, where $ \cA $ is a unital C$^\ast$-algebra and $ \phi: \cA \lra \mathbb{C}  $ is a positive, linear, unital map. The elements of $ \cA $ will be called \emph{non-commutative random variables}.

For $ n \geq 1 $ , the $ n$-th classical, free, respectively Boolean cumulant are the $ n$-multilinear maps from $ \cA^n$ to $ \mathbb{C} $ denoted by $ k_n $, $ \kappa_n $, respectively $ \mathfrak{b}_n $ and given by the following recurrence relations:
\begin{align}
\phi(X_1 X_2 \cdots X_n) 
= &
\sum_{ \pi \in P( n ) } \prod_{ \substack{ B \in \pi \\ B = \{ b(1),  b(2), \dots, b(s) \} } }
k_{ s } ( X_{ b(1) }, X_{ b(2 ) } , \dots, B_{ b(s ) } )\label{eq:1}\\
=&
\sum_{ \sigma \in NC( n ) } \prod_{ \substack{ B \in \sigma \\ B = \{ b(1),  b(2), \dots, b(s) \} } }
\kappa_{ s } ( X_{ b(1) }, X_{ b(2 ) } , \dots, B_{ b(s ) } )\label{eq:2}\\
=&
\sum_{ \tau \in I( n ) } \prod_{ \substack{ B \in \tau \\ B = \{ b(1),  b(2), \dots, b(s) \} } }
\mathfrak{b}_{ s } ( X_{ b(1) }, X_{ b(2 ) } , \dots, B_{ b(s ) } ).\label{eq:3}
\end{align}

 Two unital subalgebras $ \cA_1 $, $ \cA_2 $ of $ \cA $ are said to be \emph{free independent}
if 
\[ \phi(a_1 a_2 \cdots a_n ) = 0 \]
whenever $ a_i $ ($1 \leq i \leq n  $) are such that $ \phi( a_i ) = 0 $, $ a_i \in \cA_{ \epsilon(i) } $ with $ \epsilon(i) \in \{ 1, 2 \} $, $ \epsilon(i) \neq \epsilon ( i + 1 ) $. An equivalent condition (see \cite{speichernica}) is that 
$ \kappa_n(a_1, a_2, \dots, a_n ) = 0 $ 
whenever $ a_i \in \cA_{ \epsilon(i) }$
such that not all $ \epsilon(1), \epsilon(2), \dots, \epsilon(n) $ are equal.

Two subalgebras $ \cA_1$, $ \cA_2 $ of $ \cA $ are said to be \emph{Boolean independent} 
(see  \cite{wvw}, \cite{mp-bool})
if 
\[ 
\phi(a_1 a_2 \cdots a_n ) = \phi(a_1) \phi(a_2) \cdots \phi(a_n) 
\]
whenever $ a_i \in \cA_{ \epsilon( i ) } $ with $ \epsilon(i) \neq \epsilon ( i + 1 ) $.
An equivalent condition (see \cite{mp-bool}) is that 
$ \mathfrak{b}_n (a_1, \dots, a_n ) = 0 $ whenever $ a_i \in \cA_{ \epsilon( i ) } $ such that
not all $ \epsilon(1), \epsilon (2), \dots, \epsilon(n ) $ are equal.

We will say that a subalgebra $ \cA_1 $ of $ \cA $ is \emph{monotone independent}
 (see  \cite{muraki}, \cite{mp-pjm}, \cite{sbg})
from $ \cA_2 $, another subalgebra of $ \cA $ if, for all $ x_1, x_2 \in \cA $, $ b_1, b_2 \in \cA_2 $ and $ a \in \cA_1 $ we have that 
\begin{align*}
\phi(x_1 b_1 a) &= \phi(x_1 b_1) \phi(a)\\
\phi(a b_2 x_2 ) & = \phi(a) \phi(b_2 x_2 )\\
\phi(x_1 b_1 a b_2 x_2 ) &= \phi( x_1 b_1 b_2 x_2) \phi(a ).
\end{align*}

A selfadjoint element $ x \in \cA $ is said to be semicircular, respectively Bernoulli distributed of mean 0 and variance $ \sigma > 0 $ if $ \kappa_n ( x, x, \dots, x ) = \delta_n, 2 \sigma^2 $, respectively if $ \mathfrak{b}_n ( x, x, \dots, x ) = \delta_{ n, 2 } \sigma^2 $. 

 The folowing result is known as the Free Wick Theorem (see \cite{ege-mp}):

\begin{prop}\label{free:wick}
 Let $ H $ be a real Hilbert space with orthonormal basis $ \{ e_i\}_{ i \in I } $ and 
$ \varphi: H \otimes \mathbb{C} \lra \cA $
 be a linear map such that 
$ \{ \varphi( e_i ) \}_{ i \in I }$
is a free family of semicircular elements of mean 0 and variance 1. Then, for any $ f_1, \dots, f_n \in H \otimes \mathbb{C} $ we have that
\[
 \phi( \varphi(f_1) \varphi(f_2) \cdots \varphi(f_n) ) = 
\sum_{ \pi\in NC_2(n) } \prod_{ ( i, j) \in \pi } \langle f_i, f_j \rangle. 
\]
\end{prop}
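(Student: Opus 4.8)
The plan is to reduce to the case of basis vectors by multilinearity and then feed the free moment--cumulant formula \eqref{eq:2} together with the cumulant characterizations of freeness and of the semicircular distribution. First I would observe that both sides of the claimed identity are multilinear in $(f_1, \dots, f_n)$, provided $\langle \cdot, \cdot \rangle$ denotes the \emph{bilinear} (symmetric) extension of the real inner product of $H$ to $H \otimes \mathbb{C}$ rather than the Hermitian one. This is the correct convention here because $\varphi$ is complex linear, so $\phi(\varphi(f)\varphi(g))$ is genuinely bilinear in $(f,g)$ and agrees with $\sum_i c_i d_i$ when $f = \sum_i c_i e_i$ and $g = \sum_i d_i e_i$; in particular the right-hand side is complex linear in each slot, matching the left. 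Granting this, it suffices to prove the identity when each $f_k = e_{i_k}$ is a basis vector, in which case $\langle e_{i_k}, e_{i_l}\rangle = \delta_{i_k, i_l}$.

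Next I would expand $\phi(\varphi(e_{i_1}) \cdots \varphi(e_{i_n}))$ via \eqref{eq:2}, obtaining a sum over $\sigma \in NC(n)$ of products of free cumulants $\kappa_{|B|}$ taken over the blocks $B$ of $\sigma$, each evaluated on the corresponding tuple of $\varphi(e_{i_k})$. The key step is to determine which blocks survive. Since $\{\varphi(e_i)\}_{i\in I}$ is a free family, the characterization of freeness by vanishing of mixed cumulants shows that a block whose arguments are not all the same $\varphi(e_i)$ contributes $0$; hence a block $B$ is nonzero only when all indices $i_k$ with $k \in B$ coincide, say to a common value $i$. When they do, the block evaluates to $\kappa_{|B|}(\varphi(e_i), \dots, \varphi(e_i))$, which equals $\delta_{|B|,2}$ because $\varphi(e_i)$ is semicircular of variance $1$. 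Consequently the only surviving $\sigma$ are those all of whose blocks have exactly two elements, that is $\sigma \in NC_2(n)$, and each surviving pair $\{k,l\}$ contributes exactly $\delta_{i_k, i_l} = \langle e_{i_k}, e_{i_l}\rangle$. Summing over the survivors yields $\sum_{\pi \in NC_2(n)} \prod_{(k,l)\in\pi} \langle e_{i_k}, e_{i_l}\rangle$, which is the asserted formula in the basis case.

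The argument is essentially forced once those two cumulant inputs are in place, so I do not expect a deep obstacle. The only point that requires genuine care is the bookkeeping around the complexification: one must fix the bilinear extension of the inner product so that the right-hand side is multilinear and the reduction to basis vectors is legitimate, and one should check that $\kappa_2(\varphi(e_{i_k}),\varphi(e_{i_l})) = \langle e_{i_k}, e_{i_l}\rangle$ holds also for $i_k \neq i_l$ (where it is a mixed cumulant and hence vanishes, in agreement with orthogonality). After these checks, the general case follows from the multilinear-case identity by linearity in each argument.
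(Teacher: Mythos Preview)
Your proof is correct. The paper, however, does not actually prove this proposition: it records the Free Wick Theorem as a known result and simply cites \cite{ege-mp}. There is therefore no in-paper argument to compare against. Your route---reduce to basis vectors by multilinearity (with the correct observation that the complex-bilinear, not sesquilinear, extension of the inner product is the one that makes both sides multilinear), expand via the moment--cumulant relation \eqref{eq:2}, kill mixed cumulants using freeness, and kill non-pair blocks using the semicircular assumption---is the standard textbook proof and is exactly what one would expect to find in the cited reference. The consistency check you flag, that $\kappa_2(\varphi(e_{i_k}),\varphi(e_{i_l}))=\delta_{i_k,i_l}=\langle e_{i_k},e_{i_l}\rangle$ also when $i_k\neq i_l$, is indeed the only place where care is needed, and you handle it correctly.
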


A similar result holds true for the Boolean framework. More precisely, we have the following proposition.

\begin{prop}\label{wick:boolean}
 Let $ H $ be a real Hilbert space with orthonormal basis $ \{ e_i\}_{ i \in I } $ and 
$ \varphi: H \otimes \mathbb{C} \lra \cA $
 be a linear map such that 
$ \{ \varphi( e_i ) \}_{ i \in I }$
is a Boolean independent family of Bernoulli distributed elements of mean 0 and variance 1. Then, for any $ f_1, \dots, f_n \in H \otimes \mathbb{C} $ we have that
\begin{equation}\label{eq:4}
 \phi( \varphi(f_1) \varphi(f_2) \cdots \varphi(f_n) ) = \left\{
\begin{array}{ll}
         0 & \text{if $ n$ is odd };\\
       \displaystyle  \prod_{ i =1}^{ \frac{n}{2} } \langle f_{ 2i - 1}, f_{ 2 i } \rangle & \text{if $n $ is even }.\end{array} \right.
\end{equation}

Equivalently, 
\begin{align*}
 \phi( \varphi(f_1) \varphi(f_2) \cdots \varphi(f_n) ) 
= & 
\langle f_1, f_2 \rangle \phi ( \varphi( f_3) \cdots \varphi( f _n ) ) \\
=&
\sum_{ \pi\in I_2(n) } \prod_{ ( k, l ) \in \pi } \langle f_k, f_l \rangle. 
\end{align*}
\end{prop}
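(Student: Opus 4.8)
The plan is to use the Boolean moment--cumulant formula \eqref{eq:3}, reducing first to basis vectors by multilinearity. Both sides of \eqref{eq:4} are multilinear in $f_1, \dots, f_n$: the left-hand side because $\varphi$ is linear and $\phi$ together with the product in $\cA$ is multilinear, and the right-hand side because the inner product is extended bilinearly to $H \otimes \mathbb{C}$. Hence it suffices to establish the identity when each $f_j$ equals a basis vector $e_{i_j}$, in which case $\langle e_{i_k}, e_{i_l} \rangle = \delta_{i_k, i_l}$ by orthonormality.

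For such basis vectors I would expand $\phi(\varphi(e_{i_1}) \cdots \varphi(e_{i_n}))$ using \eqref{eq:3}, obtaining a sum over interval partitions $\tau \in I(n)$ of products of Boolean cumulants $\mathfrak{b}_{|B|}$ over the blocks $B$ of $\tau$. Two vanishing conditions then prune this sum. First, since $\{ \varphi(e_i) \}_{i \in I}$ is a Boolean independent family, any cumulant $\mathfrak{b}_s(\varphi(e_{i_{b(1)}}), \dots, \varphi(e_{i_{b(s)}}))$ in which the indices $i_{b(1)}, \dots, i_{b(s)}$ are not all equal vanishes, so only blocks whose entries carry a single common index survive. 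Second, on such a block the Bernoulli condition gives $\mathfrak{b}_s(\varphi(e_i), \dots, \varphi(e_i)) = \delta_{s,2}$, so only blocks of size exactly $2$ contribute. Consequently the only surviving $\tau$ are those all of whose blocks have two elements, that is, $\tau \in I_2(n)$.

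It then remains to read off the two cases. If $n$ is odd, $I_2(n) = \emptyset$ and the sum is empty, giving $0$. If $n$ is even, $I_2(n)$ consists of the single partition into consecutive pairs $\{ (2k-1, 2k) : 1 \leq k \leq n/2 \}$, whose contribution is $\prod_{k=1}^{n/2} \mathfrak{b}_2(\varphi(e_{i_{2k-1}}), \varphi(e_{i_{2k}}))$. Each factor equals $\delta_{i_{2k-1}, i_{2k}} = \langle e_{i_{2k-1}}, e_{i_{2k}} \rangle$, so the product is $\prod_{k=1}^{n/2} \langle e_{i_{2k-1}}, e_{i_{2k}} \rangle$, matching \eqref{eq:4} on basis vectors; multilinearity then yields \eqref{eq:4} for arbitrary $f_j$. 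The equivalent formulations follow at once: the sum over $I_2(n)$ is precisely the piecewise expression in \eqref{eq:4} because $I_2(n)$ is empty or a singleton, and peeling off the first block $(1,2)$ gives the recursion $\langle f_1, f_2 \rangle \phi(\varphi(f_3) \cdots \varphi(f_n))$.

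I expect no serious obstacle here, as the argument is a direct application of \eqref{eq:3}. The only points requiring care are ensuring that the inner product is taken in its bilinear (rather than sesquilinear) extension, so that genuine multilinearity holds, and the combinatorial bookkeeping showing that the two vanishing conditions together force the interval partition to be the unique consecutive pairing.
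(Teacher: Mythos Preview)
Your argument is correct and, in fact, cleaner than the paper's. Both proofs begin by reducing to basis vectors via multilinearity, but then diverge. The paper proceeds by induction on $n$: it identifies the maximal initial run $f_1 = f_2 = \cdots = f_m$ of identical basis vectors, uses the \emph{factorization} definition of Boolean independence to split $\phi(\varphi(f_1)\cdots\varphi(f_n)) = \phi(\varphi(f_1)^m)\,\phi(\varphi(f_{m+1})\cdots\varphi(f_n))$, and then handles the first factor by a parity case split, invoking the moment--cumulant formula \eqref{eq:3} only to evaluate $\phi(\varphi(f_1)^m)$ when $m$ is even. You instead apply \eqref{eq:3} globally from the outset and use the \emph{cumulant} characterization of Boolean independence (vanishing of mixed $\mathfrak{b}_s$) together with the Bernoulli condition to prune the sum over $I(n)$ down to $I_2(n)$ in one stroke. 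Your route avoids the induction and the odd/even case analysis on the run length $m$, at the modest cost of appealing to the equivalence between the two formulations of Boolean independence (which the paper states but does not prove). Either way the content is the same; your version is the more streamlined packaging.
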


\begin{proof}

Since both sides of the equation are multilinear in $ f_1, \dots, f_n $, it suffices to prove the result for all $ f_k $ from the orthonormal basis $ \{ e_i \}_{ i \in I } $.  For $ n \leq 2 $, the equality follows from (\ref{eq:2}) and the fact that 
$ \phi( \varphi(e_i ) )=0 $.

 For $ n > 2 $, let 
 $ m = \max \{ p : f_k = f_1, \ 1 \leq  k \leq p \} $. From equation (\ref{eq:2}) we have that
\[ 
\phi( \varphi(f_1) \cdots \varphi( f_n ) ) = \phi ( \varphi(f_1) \cdots \varphi( f_m ) )\cdot
\phi( \varphi( f_{ m + 1 } ) \cdots \varphi( f_n ) ).
\]
If $ m $ is odd then $ \phi( \varphi(f_1)^m ) = 0 $;
 also, the right hand side of (\ref{eq:4}) will contain the factor $ \langle f_m, f_{m+1 } \rangle $ which cancels, hence in this case the equality holds. Suppose than $ m $ is even. Equation
(\ref{eq:2})  gives
\[ \phi( \varphi(f_1 )^m ) = \sum_{ \tau \in I(n) } \prod_{ B = \text{block of } \tau} 
\mathfrak{b}_{ | B | } (\varphi(f_1 ), \dots, \varphi( f_1 ) ).
\]
Since $ \varphi( f_1 ) $ is Bernoulli distributed of mean 0 and variance 1, all its Boolean cumulants cancel, except the ones of order 2, which equal 1, therefore
\begin{align*}
\phi ( \varphi(f_1) \cdots \varphi( f_m ) ) 
&= \phi ( \varphi(f_1)^m ) = 1 
= [ \phi ( \varphi ( f_1)^2 ) ]^{ \frac{m}{2} } \\
&=  \langle f_1, f_1 \rangle^{ \frac{m}{2} } 
= \prod_{ i =1}^{ \frac{m}{2} }\langle f_{2i -1}, f_{ 2i }\rangle,
\end{align*}
and the conclusion follows by induction.

\end{proof}

\subsection{Ensembles of random matrices.}
Throughout the paper, $ M_N (\mathbb{C} ) $ will denote 
 the C$^\ast$-algebra of $ N \times N $ square matrices with complex entries 
and $ M_N(\cA ) $ the C$^\ast$-algebra $M_N(\mathbb{C} ) \otimes \cA $; by a random
matrix with entries in $ \cA $ we will understand an element of $ M_N (\cA )$. 
Throughout  the paper, 
by an \emph{ensemble of random matrices} with entries in $ \cA $ we will understand a set 
$ \{ A_{ i, N }\}_{ i \in I, N \in \mathbb{Z}_+ } $ such that  $ A_{i, N } \in M_N (\cA) $
for all $ i, N $. The ensemble $ \{ A_{ i, N } \}_{ i \in I, N \in \mathbb{Z}_+ } $ is said to have
\emph{limit distribution}  if for any $ i_1, \dots, i_n \in I $, the limit
\[ \lim_{ N \lra \infty } \tr(A_{i_1, N} \cdot A_{ i_2, N } \cdots A_{ i_n, N } ) \]
exists and it is finite.

\section{Random matrices with free circular entries}\label{section:free}

\subsection{Semicircular random matrices}\label{section:31}

Let $ H $ be a real Hilbert space and $ \cH = H \otimes \mathbb{C} $ be its complexification.
Let $ \{ S_N ( f ) \}_{ N \in \mathbb{ Z }_+, f \in \cH } $ be an ensemble of random matrices such that
$ S_N ( f ) = [ c_{ i, j } ( f ) ]_{ i, j =1}^N $ with $ c_{i, j }( f ) \in \cA $  such that
\begin{enumerate}
\item[(i)] $ \displaystyle 
\phi( c_{ i, j }( f ) c_{ k, l } ( g ) ) =\frac{1}{N} \delta_{ i, k } \delta_{ j, l } \langle f, g \rangle $
\item[(ii)]$ S_N ( f )^\ast = S_N ( f ) $
\item[(iii)]$ \{ \Re c_{ i, j }( f ) , \Im c_{ i, j } ( f ) \}_{ 1\leq i\leq j \leq N } $ form a free family of semicircular elements of mean 0.
\end{enumerate}

Let $ l_1, \dots, l_ r > 0 $ and put $ l_0 = n_ 0 = 0 $, and, for $ 1 \leq k \leq r $, put 
$ n_k = n_{ k -1}+ l_ k $. Let $ m = n_r = l_1 + l_2 + \dots + l_r $; take $ f_1, f_2, \dots, f_m \in \cH $ and let
\[ 
Y^{ ( N ) }_{ k } = X_N ( f_{ n_{ k -1 } + 1 } ) X_N ( f_{ n_{ k -1 } + 2 } ) \cdots X_N ( f_{ n_{ k }  } ).
\]

\begin{thm}\label{thm:1}
With the notations above, we have that
\[
\kappa_r ( \Tr(Y^{( N )} _1), \Tr( Y^{ ( N ) }_2 ), \dots, \Tr( Y^{ (N ) }_r ) ) = O(N^{ 2 - r } ).
\]
\end{thm}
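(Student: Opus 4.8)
The plan is to compute the free cumulant by first reducing it, via the moment--cumulant relation (\ref{eq:2}) and the free Wick theorem (Proposition \ref{free:wick}), to a weighted sum over non-crossing pairings of all $ m = l_1 + \cdots + l_r $ letters, and then to read off the power of $ N $ through a genus count based on Proposition \ref{part-perm}.

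First I would record the combinatorial skeleton. Writing each factor entrywise, the product of the traces is $ \Tr(Y^{(N)}_1)\cdots\Tr(Y^{(N)}_r) = \Tr_\gamma\bigl(X_N(f_1),\dots,X_N(f_m)\bigr) $, where $ \gamma\in S_m $ is the permutation whose cycles are the $ r $ consecutive intervals $ \{n_{k-1}+1,\dots,n_k\} $, so that $ \#(\gamma)=r $. Expanding gives $ \Tr_\gamma(X_N(f_1),\dots) = \sum_{i:[m]\to[N]} \prod_{a=1}^m c_{i(a),i(\gamma(a))}(f_a) $, so every joint moment of the traces is a sum over index maps $ i $ of a $ \phi $-value of a product of entries. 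Applying $ \phi $ and Proposition \ref{free:wick}, each such $ \phi $-value becomes a sum over $ \pi\in NC_2 $ of products of covariances; since (iii) makes every entry circular, only pair blocks survive, and self-adjointness (ii) forces each surviving pair to match $ c_{i,j} $ with $ c_{j,i}=c_{i,j}^\ast $, producing a factor $ N^{-1}\delta\delta\langle f_a,f_b\rangle $ with the index identifications dictated by (i).

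Next I would pass from moments to the cumulant. Using (\ref{eq:2}) and Möbius inversion over $ NC(r) $ to write $ \kappa_r $ as an alternating sum of partitioned moments, the contributions reorganize into a single sum over pairings $ \pi\in NC_2(m) $ of all the letters, and the Möbius coefficients cancel every term except those for which $ \pi $ connects the $ r $ cycles of $ \gamma $ into one block, i.e. $ \pi\vee\gamma = 1_m $, equivalently $ \#(\gamma\vee\pi)=1 $. This is the free analogue of the principle that cumulants retain only the connected correlations (the product--cumulant formula). For each such $ \pi $, Lemma \ref{lemma:2} evaluates the remaining index sum as $ N^{\#(\tau)} $ for a permutation $ \tau $ read off from $ \pi\gamma $ with $ \#(\tau)=\#(\gamma\pi) $, so the term has order $ N^{\#(\gamma\pi)-m/2} $ times a product of inner products independent of $ N $.

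Finally I would invoke Proposition \ref{part-perm} with $ \tau=\gamma^{-1} $ and $ \sigma=\pi $: since $ \#(\gamma^{-1})=r $, $ \#(\pi)=m/2 $, and $ \#(\gamma^{-1}\vee\pi)=\#(\gamma\vee\pi)=1 $ on the surviving terms, the inequality reads $ r + \#(\gamma\pi) + m/2 \le m + 2 $, whence $ \#(\gamma\pi)-m/2 \le 2-r $. As only finitely many pairings $ \pi $ contribute, summing yields $ \kappa_r(\Tr(Y^{(N)}_1),\dots,\Tr(Y^{(N)}_r)) = O(N^{2-r}) $. The hard part is the middle step: verifying that the non-crossing Möbius inversion annihilates exactly the disconnected pairings and leaves precisely the connected condition $ \pi\vee\gamma=1_m $, and tracking the covariance and parity bookkeeping so that Lemma \ref{lemma:2} applies and the loop count is exactly $ \#(\gamma\pi) $. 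Once connectedness is established, the genus inequality delivers the exponent $ 2-r $ immediately.
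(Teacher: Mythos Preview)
Your proposal is correct and follows essentially the same route as the paper: expand via the Free Wick Theorem, observe that the index sum contributes $N^{\#(\gamma\pi)-m/2}$, reduce $\kappa_r$ to the sub-sum over pairings with $\pi\vee\gamma=1_m$, and bound the exponent using Proposition~\ref{part-perm}. The step you flag as ``the hard part'' --- that free M\"obius inversion over $NC(r)$ isolates exactly the connected pairings --- is handled in the paper by a short direct observation: since the cycles of $\gamma$ are consecutive intervals, if $\gamma\vee\sigma$ were crossing then some pair of blocks of $\sigma$ would already cross, contradicting $\sigma\in NC_2(m)$; hence $\gamma\vee\sigma$ is automatically non-crossing, the moment factors over $NC(r)$, and induction on $r$ gives the connectedness condition without any delicate M\"obius bookkeeping. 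You also do not need Lemma~\ref{lemma:2} here --- the constraint $i_k=i_{\gamma\sigma(k)}$ can be read off directly from the covariance deltas, yielding $N^{\#(\gamma\sigma)}$ free indices.
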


\begin{proof}
Let $ \gamma \in S_m $ with  cycle decomposition
 $\displaystyle  \prod_{ k =1}^r ( n_{ k-1} + 1, n_{ k -1 } + 2 , \dots, n_k ) $.
Then the Free Wick Theorem gives:
\begin{align*}
\phi( \Tr ( Y_1 ) \cdots \Tr( Y_r ) ) 
&=
\sum_{ i_1, \dots, i_m =1}^N 
\phi ( c_{ i_1 i_{ \gamma( 1 ) } } (f_1 ) \cdots c_{ i_m i_{ \gamma (m ) } } ( f_m ) )\\
&= 
\sum_{ i_1, \dots, i_m =1}^N 
N^{ - \frac{m}{2}}
\sum_{ \sigma \in NC_2 (m ) } 
\prod_{ ( k, l ) \in \sigma }
\langle f_k, f_l \rangle 
\delta_{ i_k, i_{ \gamma( l ) } }
\delta_{ i_l, i_{ \gamma( k ) } }\\
&=
\sum_{ \sigma \in NC_2 ( m ) }
N^{ \# ( \gamma  \sigma ) - \frac{m }{ 2 } }
\prod_{ ( k, l ) \in \sigma } \langle f_k, f_l \rangle 
\delta_{ i_k, i_{ \gamma( l ) } }
\delta_{ i_l, i_{ \gamma( k ) } }
\end{align*}

The blocks of $ \gamma \vee \sigma $ are unions of blocks of $ \gamma $. 
Suppose 
that 
$ \gamma \vee \sigma \not \in NC( m ) $, that is there exit $ B_1, B_2, B_3, B_4 $ blocks of
$ \gamma $ in this lexicographical order such that
$ B_1, B_3 \in D_1 $ and $ B_2, B_4 \in D_2 $ with $ D_1, D_2 $ distinct blocks of 
$ \gamma \vee\sigma $. Hence there exist $ b_k \in B_k $ ( $ 1 \leq k \leq 4 $ such that
$ \sigma (b_1) = b_3 $ and $ \sigma ( b_2 ) = b_4 $, which implies that 
$ \sigma \not \in NC_2(m) $.  We have then tha $ \gamma \vee \sigma \in NC(m ) $ and an inductive argument on $ r $ gives us that 
\begin{equation}\label{eq:5}
\kappa_r ( \Tr(Y^{(N)}_1), \dots, \Tr ( Y^{ ( N )}_r ) )= 
\sum_{\substack{ \sigma \in NC_2 ( m )\\ \gamma \vee \sigma =1_m } }
N^{ \# ( \gamma \sigma ) - \frac{m }{ 2 } }
\prod_{ ( k, l ) \in \sigma } \langle f_k, f_l \rangle 
\delta_{ i_k, i_{ \gamma( l ) } }
\delta_{ i_l, i_{ \gamma( k ) } }
\end{equation}
Since the number of cycles is the same in a conjugacy class of permutations and $ \sigma^2$ is the identity permutation, we have that 
$ \# ( \gamma \sigma ) = \# ( \sigma \gamma \sigma^2)
= \# ( \sigma^{ -1} \gamma ) 
$.
Also, $ \# ( \sigma ) = \frac{m}{2} $ and $ \# (\gamma ) = r $, hence, for $ \gamma \vee \sigma = 1_m $, Proposition \ref{part-perm} gives
\[
 r + \# ( \gamma\sigma ) +\frac{m}{2} \leq m + 2
\]
which implies $ \# ( \gamma \sigma ) - \frac{m}{2} \leq 2 - r $, and the conclusion follows.

\end{proof}

\begin{defn}
 Let $ \{ A_{ i, N } \}_{  i \in I , N \in \mathbb{Z}_+ } $ be an ensemble of random matrices with entries in $ \cA $. We will say that the ensemble has \emph{second order free limit distribution} if it has limit distribution and, for all
$ i_1, i_2, \dots, i_n \in I $, and collection 
$ \{ p_k \}_{ k \in \mathbb{Z}_+ } $ of non-commutative polynomials in $ n $ variables, with the notation 
$ Y_k = p_k ( A_{ i_1, N }, \dots, A_{ i_n, N } ) $, we have that 
\begin{itemize}
\item[(1)] $ \displaystyle  \lim_{ N \lra \infty }
 \kappa_2 \left( \Tr( Y_1 ) ,  \Tr(  Y_2) \right) $  exists and it is finite
\item[(2)] $ \displaystyle  \lim_{ N \lra \infty }
 \kappa_r \left( \Tr( Y_1 ) ,  \dots, \Tr(Y_r ) \right)  = 0 $ for all $ r \geq 3 $.
\end{itemize}
\end{defn}

Ensembles of matrices from $ \coprod_{ n=1}^\infty M_n ( \mathbb{C} ) $ with limit distribution have second order free distribution, since free cumulants with constant entries cancel (see, for example \cite{speichernica}); an immmediate consequence of Theorem \ref{thm:1} is that ensembles of semicircular random matrices also have second order free limit distribution.

\subsection{}\label{section:ast}
 The next notion can be seen as an analogue, in our framework, of the notion of second order free independence from \cite{mingo-nica}.

\begin{defn}\label{def:ast}

Consider $ K \in \mathbb{Z}_+ $ and for each $ k \in K $ let 
$ E_k = \{ A_{ i, N }^{( k ) } \}_{ i\in I, N \in \mathbb{Z}_+ } $ be an ensemble of random matrices that has limit distribution.  We will say that the family $ \{ E_k \}_k $ has \emph{Property  ($ \ast $)} if the following hold true:
\begin{enumerate}
\item[(1)] $ \{ E_k \}_k $ is an asymptotically free family with respect to the map $ \tr \otimes \phi $.
\item[(2)] Suppose that $ \{ P_k \}_{ k \in K } $ are non-commutative polynomials in $ p $ variables and 
that
$ k_1, \dots k_{ s+ t} \in K  $ 
with $ k_j \neq k_{ j + 1 } $ for $ j \in [ s+ t ] \setminus \{ s, s+ t \} $ .
Suppose also that $ \{ A_{ 1, N}^{ ( k ) },  \dots,  A_{ p, N}^{ ( k ) } \}_{N} $
are subensembles of $ E_k $ with limit distribution such that 
\[ 
\lim_{ N \lra \infty }  \tr( P_k ( A_{ 1, N}^{ ( k ) },  \dots,  A_{ p, N}^{ ( k ) } ) )= 0
\]
and denote by $ \alpha_j^{( N )} = P_{ k_j } ( A_{ 1, N}^{ ( k_j ) },  \dots,  A_{ p, N}^{ ( k_j ) } ) $. Then
\[ 
\lim_{ N \lra \infty}
 \kappa_2 ( \Tr( \alpha_1^{( N )} \cdots \alpha_{s}^{ ( N ) } ), 
\Tr( \alpha_{s+1}^{ ( N ) } \cdots \alpha_{s+t}^{ ( N ) }  ) 
= 
\delta_{ s, t} \prod_{ j=1}^s \lim_{ N \lra \infty } 
\tr( \alpha_{j}^{ ( N ) } \alpha_{ s+t+1-j }^{ ( N ) } )
\]
\item[(3)] Suppose that $ r \geq 3 $, $ m \in \mathbb{Z}_{+ } $, that 
$ \{ Q_l \}_{ l=1}^r $  are non-commutative polynomials in $ m $ variables 
and that $ k_1, \dots, k_m \in K $.
Suppose also that $ \{ A_{ 1, N}^{ ( k ) },  \dots,  A_{ p, N}^{ ( k ) } \}_{N} $
are subensembles of $ E_k $ with limit distribution and denote
\[
\beta_{l}^{ ( N )} = \Tr\left( Q_l ( A_{ 1, N}^{ ( k_1 ) },  \dots,  A_{ p, N}^{ ( k_1 ) }, \dots, 
A_{ 1, N}^{ ( k_m ) },  \dots,  A_{ p, N}^{ ( k_m ) } )\right).
\]
Then
\[
\lim_{ N \lra \infty } \kappa_r( \beta_1^{( N ) } , \dots, \beta_r^{( N ) } ) = 0.
\]
\end{enumerate}

\end{defn}
In the next two section, that is Section \ref{section:33} and Section \ref{section:34}, we will prove the following result.
\begin{thm}
Let $ \{ f _k \}_{ k \in \mathbb{Z}_+ } $ be an orthonormal set from $ \cH $ 
and let $ \cE $ be an ensemble of constant matrices with limit distribution.
The family of ensembles  $ \cE $ and $ \{ S_N ( f_k ) \}_ N $ ,  $(  k \in \mathbb{Z}_+ )$,
 has Property $( \ast )$.
\end{thm}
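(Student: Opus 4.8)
The plan is to treat all three conditions through a single genus expansion. First I would use multilinearity of the free cumulants together with the expansion of the polynomials into monomials to reduce each condition to a statement about $\kappa_r(\Tr(W_1),\dots,\Tr(W_r))$, where each $W_l$ is a word alternating constant matrices drawn from $\cE$ and semicircular matrices $S_N(f_k)$. Applying the Free Wick Theorem (Proposition \ref{free:wick}) to the semicircular entries, and then Lemma \ref{lemma:2} to reassemble the resulting index sums, I expect a master formula of the form
\[
\phi(\Tr(W_1)\cdots\Tr(W_r))=N^{-m/2}\sum_{\sigma\in NC_2(m)}\Big(\prod_{(a,b)\in\sigma}\langle h_a,h_b\rangle\Big)\,\Tr_{\tau(\gamma,\sigma)}(D_1,\dots,D_m),
\]
where $m$ is the total number of semicircular letters, $\gamma\in S_m$ is the permutation whose cycles record the $r$ traces, the $h_v$ are the test vectors attached to the semicircular letters, the $D_v$ are the intervening blocks of constant matrices, and $\tau(\gamma,\sigma)$ is the permutation furnished by Lemma \ref{lemma:2}. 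Since each cycle of $\Tr_{\tau}$ is a non-normalized trace of a product of constant matrices, equal to $N$ times a convergent normalized trace, the contribution of a fixed $\sigma$ is of order $N^{\#(\gamma\sigma)-m/2}$, exactly as in the proof of Theorem \ref{thm:1} but with the scalar $N^{\#(\gamma\sigma)}$ replaced by $\Tr_{\tau}$ of the same cycle count.

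Conditions (1) and (3) would then follow from the genus inequality with little more than bookkeeping. For (3), the same inductive connectedness argument used for Theorem \ref{thm:1} restricts the cumulant to pairings with $\gamma\vee\sigma=1_m$; with $\#(\gamma)=r$, $\#(\sigma)=m/2$ and $\#(\gamma\vee\sigma)=1$, Proposition \ref{part-perm} bounds each surviving term by $N^{2-r}$, so $\kappa_r\to0$ for $r\geq3$ --- this is Theorem \ref{thm:1} with the harmless constant-matrix decorations. For (1) I would rerun the expansion with a single normalized trace $\tr=\tfrac1N\Tr$, so that $\#(\gamma)=1$ and the order becomes $\#(\gamma\sigma)-\tfrac m2-1\leq0$; the surviving $N^0$ terms are precisely the non-crossing pairings, and since $\langle f_k,f_l\rangle=\delta_{k,l}$ such a pairing only contracts letters carrying the same test vector. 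The standard dictionary between non-crossing pairings and freeness then identifies the limit with the joint $\ast$-distribution of a semicircular family that is free among its members and free from the limit of $\cE$, i.e.\ asymptotic freeness.

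Condition (2) is the substantive point and the step I expect to be hardest. Here $\kappa_2(x,y)=\phi(xy)-\phi(x)\phi(y)$ selects exactly the connected pairings ($\gamma\vee\sigma=1_m$, $\#(\gamma)=2$), and Proposition \ref{part-perm} gives order $N^{\#(\gamma\sigma)-m/2}$ with $\#(\gamma\sigma)\leq m/2$; the limit is therefore carried by the equality (genus-zero, annular) pairings $\#(\gamma\sigma)=m/2$. The core of the proof is to classify these equality cases: a non-crossing pairing that joins the two boundary cycles and saturates the genus bound must match them in parallel, pairing the block occupied by $\alpha_j$ against a block of the opposite cycle in reversed cyclic order. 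At this point the centering hypotheses $\lim_N\tr(\alpha_j)=0$ eliminate every configuration in which some block is contracted only internally, while the alternation $k_j\neq k_{j+1}$ together with orthonormality forces each surviving contraction to pair a whole block $\alpha_j$ with a whole opposing block; this is compatible with the annular planarity only when $s=t$, and then the matching is forced to be $\alpha_j\leftrightarrow\alpha_{s+t+1-j}$. Summing the surviving terms and using $\Tr_{\tau}\sim N^{m/2}\prod\tr$ should collapse the expression to $\delta_{s,t}\prod_{j=1}^s\lim_N\tr(\alpha_j\alpha_{s+t+1-j})$.

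The main obstacle is precisely this equality-case analysis in (2): making rigorous the claim that genus-zero annular pairings, after the centering and orthonormality reductions, bundle the individual semicircular letters of $\alpha_j$ and $\alpha_{s+t+1-j}$ into a single normalized trace of the product $\alpha_j\alpha_{s+t+1-j}$. Establishing this bundling --- essentially the annular analogue of the planar (first-order) freeness computation --- and verifying that all non-saturating and non-parallel pairings are genuinely subleading is where the real work lies; conditions (1) and (3) are comparatively routine consequences of Theorem \ref{thm:1} and the genus inequality of Proposition \ref{part-perm}.
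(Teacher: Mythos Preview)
Your overall strategy --- Free Wick expansion into linearly non-crossing pairings, connectedness for the cumulants, and control of the order of each term via the cycle count of $\gamma\sigma$ --- is exactly the backbone of the paper's proof. Condition (1) is simply quoted from the literature, and for condition (3) the paper proceeds much as you suggest, except that instead of invoking Proposition \ref{part-perm} it argues directly (Theorem \ref{thm:34}) that after stripping pairings internal to a block of $\gamma$ the permutation $\tau$ produced by Lemma \ref{lemma:2} has no singletons and at least one cycle of length $\geq 3$, which already forces $\Tr_\tau(A_1,\dots,A_M)=O(N^{M/2-1})$. Your genus-inequality route would also work here and in fact gives the sharper $O(N^{2-r})$.

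The substantive difference is condition (2). The paper does \emph{not} isolate a leading $N^0$ layer via the genus equality and then whittle it down; instead it proves an \emph{exact} identity for every $N$ (Lemma \ref{lemma:35}), with no error term at all. Two devices make this possible and are worth noting. First, the cyclic freedom in each trace is exploited: the first argument is written as $\Tr(A_nP_n\cdots A_1P_1)$ with the indices reversed, so that when the product $\Tr(\cdot)\Tr(\cdot)$ is expanded the linear non-crossing constraint from the Free Wick theorem lines the $P_k$'s up opposite the $Q_k$'s. Second, the centering of the $P_k,Q_k$ is shown to cancel \emph{exactly} the pairings that leave some $P_k'$ or $Q_k'$ invariant (equation (\ref{eq:9})); the centering of the constant matrices then kills any pairing that links two adjacent $P$'s (or two adjacent $Q$'s), and orthonormality kills any pairing that spreads a single $P_t'$ over two consecutive $Q$'s. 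What survives is \emph{only} the rigid matching $P_k\leftrightarrow Q_k$, and its value is computed to be $\prod_k\tr(A_kB_k)\tr(P_kQ_k)$ on the nose.

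Your framing of the surviving pairings via ``annular planarity'' is the one point where the proposal is slightly off-key. The pairings coming from the Free Wick theorem are non-crossing in the \emph{linear} order on $[M_0]$, which is strictly more restrictive than annular non-crossing; this is precisely why the answer here has a single matching $\alpha_j\leftrightarrow\alpha_{s+t+1-j}$ rather than the sum over cyclic shifts familiar from classical second-order freeness. If you try to run the argument through the annular picture you will have to re-derive this rigidity from scratch, whereas the paper gets it for free from the linear $NC_2$ constraint together with the reversed ordering of the first trace. Your outline is not wrong, but the mechanism you should lean on is linear non-crossingness plus the exact cancellations from centering, not a genus-zero annular classification.
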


Note first that property (1) from Definition \ref{def:ast} is satisfied, since semicircular matrices are free from matrices with constant coefficients (see, for example, \cite{speichernica}).

\subsection{}\label{section:33}
Let $ s_1, \dots, s_n, j_1, \dots, j_m $ be positive integers and let $ A_1^{( N )}, \dots, A_n^{( N) }$, 
$ B_1^{( N )}, \dots, B_m^{ (N ) } $ be constant matrices that are either centered or identity, such that if 
$ s_k = s_{ k +1} $, respectively if $ j_k = j_{ k+1} $, then $ A_k^{( N )} \neq I $, respectively $ B_k^{( N )} \neq I $.

Let $ k_1, \dots, k_n, l_1, \dots, l_m $ be positive integers and 
\begin{align*}
P_s^{(N )} =& S_N(f_{ i_s})^{ k_s } - \tr( S_N(f_{ i_s})^{ k_s } )I \\
Q_s^{(N )} = &S_N(f_{ j_s})^{l_s } - \tr( S_N(f_{ j_s})^{ l_s } )I. 
\end{align*}
In order to show Property (2) from Definition \ref{def:ast}, it suffices to prove the following Lemma:
\begin{lemma}\label{lemma:35}
With the notations above,
\begin{align}
\kappa_2\left( 
\Tr ( A_n^{(N)} P_n^{( N ) } \cdots  \right. & \left. A_1^{(N)} P_1^{( N ) } ), 
\Tr( Q_1^{( N ) }B_1^{(N)}  \cdots  Q_m^{( N ) } B_m^{(N)})
\right) \label{eq:ast}\\
&\hspace{2cm}= \delta_{ n, m } \prod_{ k =1}^n
 \tr( A_k^{(N )} B_k^{(N )} ) \cdot
\tr( P_k^{(N)} Q_k^{(N ) } ).  \nonumber
\end{align}
\end{lemma}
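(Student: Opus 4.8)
The plan is to read off $\kappa_2$ from its meaning as a covariance: since for two arguments the second free cumulant is $\kappa_2(X,Y)=\phi(XY)-\phi(X)\phi(Y)$, I would write, with $T_1 = A_n^{(N)}P_n^{(N)}\cdots A_1^{(N)}P_1^{(N)}$ and $T_2 = Q_1^{(N)}B_1^{(N)}\cdots Q_m^{(N)}B_m^{(N)}$,
\[
\kappa_2(\Tr T_1,\Tr T_2)=\phi(\Tr T_1\cdot\Tr T_2)-\phi(\Tr T_1)\,\phi(\Tr T_2).
\]
First I would expand every factor into matrix entries: each constant matrix contributes scalar entries that pull out of $\phi$, while each power $S_N(f)^{k}$ expands into a product of semicircular entries $c_{\cdot,\cdot}(f)$. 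Applying the Free Wick Theorem (Proposition \ref{free:wick}) to the resulting product of semicircular entries expresses $\phi(\Tr T_1\cdot\Tr T_2)$ as a sum over non-crossing pairings $\sigma\in NC_2$ of the $p$ semicircular positions, each term carrying a factor $N^{-p/2}$, a product of inner products $\langle f_{\cdot},f_{\cdot}\rangle$, and Kronecker deltas in the matrix indices, exactly as in the proof of Theorem \ref{thm:1}.

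The subtraction of $\phi(\Tr T_1)\phi(\Tr T_2)$ removes precisely the pairings that split as a pairing internal to $T_1$ times one internal to $T_2$, since on the linear order ``$T_1$ then $T_2$'' these factor by the Free Wick Theorem applied to each trace separately. Hence $\kappa_2(\Tr T_1,\Tr T_2)$ is the sum over the \emph{connected} pairings, those $\sigma$ having at least one pair joining an entry of $T_1$ to an entry of $T_2$. Encoding the two cyclic words by a permutation $\gamma$ as in the proof of Theorem \ref{thm:1}, each connected term contributes $N^{\#(\gamma\sigma)-p/2}$, and connectedness forces $\gamma\vee\sigma=1_p$; Proposition \ref{part-perm} then gives $\#(\gamma\sigma)-p/2\leq 0$, so $\kappa_2=O(1)$ and the leading contribution comes exactly from the pairings attaining equality, the annular non-crossing pairings.

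Next I would exploit the centering. Because $P_s^{(N)}=S_N(f_{i_s})^{k_s}-\tr(S_N(f_{i_s})^{k_s})I$, and likewise for $Q_s^{(N)}$, any annular pairing that contracts a whole $P$- or $Q$-block within itself is cancelled by the corresponding subtracted scalar; consequently each $P$-block on the first circle must be joined across the annulus to a $Q$-block on the second. The non-crossing constraint on the annulus then forces these joinings to be cyclically parallel, which is possible only when the number of $P$-blocks equals the number of $Q$-blocks, producing the factor $\delta_{n,m}$ together with the matching $P_k\leftrightarrow Q_k$, $A_k\leftrightarrow B_k$. I would then invoke Lemma \ref{lemma:2}, whose parity condition $k+\pi\sigma(k)\equiv 1$ reflects the alternation between the two circles, to convert the surviving index sums into traces: the semicircular contractions between $P_k$ and $Q_k$ assemble into $\tr(P_k^{(N)}Q_k^{(N)})$, while each constant matrix $A_k^{(N)}$ trapped opposite $B_k^{(N)}$ assembles into $\tr(A_k^{(N)}B_k^{(N)})$, giving the claimed product at leading order.

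The hard part will be the third step: rigorously showing that among all connected non-crossing pairings only the single parallel annular matching survives at order $N^{0}$, and that every twisted or partial matching is strictly subleading. This demands a careful Euler-characteristic bookkeeping through Proposition \ref{part-perm}, together with the hypotheses that the $A_k^{(N)},B_k^{(N)}$ are centered-or-identity with $A_k^{(N)}\neq I$ whenever $s_k=s_{k+1}$ (and similarly for the $B_k^{(N)}$), which is exactly what prevents consecutive constant matrices from merging and spawning spurious leading-order terms. Verifying that the centering of the $P$- and $Q$-blocks \emph{exactly} cancels the degenerate annular diagrams, rather than merely bounding them, is the delicate point that makes the identity hold rather than just an inequality.
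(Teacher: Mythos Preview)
Your outline conflates two different targets. The lemma is an \emph{exact} identity for each fixed $N$; the remark following it in the paper contrasts it precisely with the asymptotic Theorem~5.3 of \cite{mingo-speicher}. Yet your plan is organised around isolating a ``leading-order'' contribution via the inequality of Proposition~\ref{part-perm} and discarding the rest as ``strictly subleading''. That apparatus is right for Theorem~\ref{thm:1} and for Section~\ref{section:34}, but here it is both unnecessary and insufficient: to obtain an equality you must show that every pairing other than the matched one contributes \emph{zero}, not merely a lower power of $N$. You concede this in your last sentence, but the machinery you propose (genus bounds, annular non-crossing pairings, the parity condition of Lemma~\ref{lemma:2}) is designed for asymptotic estimates and does not by itself produce exact vanishing.

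The paper's proof never invokes Proposition~\ref{part-perm}. The point you are missing is that the Free Wick Theorem gives pairings that are non-crossing on the \emph{linear} order of the entries inside $\phi$, a far stronger constraint than annular non-crossing. On the line, once pairings internal to a single $P_t'$ or $Q_t'$ are removed (equations~(\ref{eq:8})--(\ref{eq:9}); this is the exact cancellation from centering that you do mention), the remaining case analysis is short and exact. A linearly non-crossing pairing joining two different $P$-blocks must pair the last entry of one to the first entry of its neighbour, which produces a factor $\Tr(A_t)$: either $A_t$ is centered and this vanishes, or $A_t=I$, whence $s_t\neq s_{t+1}$ and $\langle f_{s_t},f_{s_{t+1}}\rangle=0$ kills the term instead. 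The same device rules out a single $P$-block touching two different $Q$-blocks, via $\Tr(B_w)=0$ or orthogonality. Your sketch omits both of these steps, jumping directly from ``self-pairings cancel'' to ``each $P$ joins a single $Q$''. With them in place the linear non-crossing condition leaves exactly one pairing pattern, and reading off the indices gives $\prod_k \tr(A_kB_k)\,\tr(P_kQ_k)$ on the nose, with no limit in $N$ required.
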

 We will prove (\ref{eq:ast})  in several steps. First, to simplify the notations, we will omit the index $ N $, with the convention that only matrices of the same dimension are multiplied.

Let us focus first to 
\[
E_{ m, n}= \phi \left ( \Tr( A_n S( f_{ s_n })^{ k_n } \cdots A_1 S( f_{ s_1})^{ k_1 } ) 
\Tr ( S( f_{ j_1} )^{ l_1} B_1 \cdots S( f_{ j_m} )^{ l_m}B_m )
\right).
\]
Let $ N_1 = n + k_1 + \dots + k_n $ , $ N_2 = m + l_1 + \dots + l_m $ and $ M = M_1 + M_2 $, $ M_0 = M -m-n $.
 Denoting by  $ a^{(p )}_{i, j},   b^{(p )}_{i, j} $, respectively $ c^{( p )}_{ i, j } $ the $ (i, j) $-entries of
$ A_p, B_p $, respectively $ S(f_p ) $, and denoting by $ \overrightarrow{i} $ the multiindex $ (i_1, i_2, \dots i_M)$, we have that
\begin{equation}\label{eq:emn}
E_{m, n} = \sum_{ \overrightarrow{i} }\phi( a^{(n)}_{ i_1, i_2} c^{( s_n)}_{ i_2, i_3 } c^{( s_n)}_{ i_3, i_4 }
\cdots 
c^{ (s_1 )}_{ i_{ M_1 }, i_1 } c^{( j_1 )}_{ i_{ M_1 + 1 }, i_{ M_1 + 2 }} \cdots 
c^{( j_m)}_{i_{M}-1, i_{ M } } b^{(m)}_{ i_{ M }, i_{ M_1 + 1 } })
\end{equation}
 From the Free Wick Theorem, the expression above is computed as a sum over all non-crossing pair partitions acting on
the  factors of the type $ c_{ i, j }^{( k )} $, more precisely we can write
\[
E_{m, n} = \sum_{ \pi \in NC_2 ( M _0 ) } \sum_{ \overrightarrow{i} \sim \pi } v(\pi) 
\] 
where we write $ \overrightarrow{i} \sim \pi $ if whenever $ \pi $ is pairing $ c_{ i_s, i_t }^{( k)} $ to $ c_{ i_{u}, i_{ v }}^{( l )}$ we have that $ i_s = i_v $ and $ i_t = i_u $, and we denote $ v(\pi) $ for the expression (depending also on $ A_1, \dots, A_n, B_1, \dots, B_m $) that results by pairing the $ c_{ i, j }^{( k )} $'s  according to $ \pi $.

Denote  $ P_s^{\prime} = S( f_{ i_s } )^{ k_s } $  and $ Q_s^{ \prime} = S( f_{ j_s } )^{ l_s }$. 
Suppose that $ \pi $ pairs two consecutive  entries of the type $ c_{ l, k}^{ r } $  from the same $ P_i^{ \prime}$ or $ Q_j^{ \prime} $. 
Without affecting the  generality, we can suppose, to simplify the notations, that the development of  $ P_i^{ \prime} $
contains the  squence 
$ \cdots  c^{( s_i )}_{ i_{ v-1}, i_v } c^{( s_i )}_{ i_{ v}, i_{v+ 1} } c^{( s_i )}_{ i_{ v+ 1}, i_{v + 2 }} 
 c^{( s_i )}_{ i_{ v+ 2 }, i_{v + 3 }} \cdots $
  and that $ \pi $ pairs 
$ c^{( s_i )}_{ i_{ v}, i_{v+ 1} } $ with  $c^{( s_i )}_{ i_{ v+ 1}, i_{v + 2 }} $. If $ \overrightarrow{i} $ is such that  $ \overrightarrow{i} \sim \pi $, then 
 $ i_v = i_{ v+2 } $, hence, eliminating $ c^{( s_i )}_{ i_{ v}, i_{v+ 1} } c^{( s_i )}_{ i_{ v+ 1}, i_{v + 2 }} $ we will obtain a summand from a development as in equation (\ref{eq:emn}) but $ k_i $ is now replaced by $ k_i - 2 $.  Same argument works if the sequence  $ c^{( s_i )}_{ i_{ v}, i_{v+ 1} } c^{( s_i )}_{ i_{ v+ 1}, i_{v + 2 }}  $ is preceded or succeded by entries of constant matrices.

Denote by $ NC_2^v(M_0) $ the set of all pair partitions acting on $ c_{ i, j }^{( k )} $'s that are pairing 
$ c^{( s_i )}_{ i_{ v}, i_{v+ 1} }$ and $  c^{( s_i )}_{ i_{ v+ 1}, i_{v + 2 }} $, denote by $ \overrightarrow{i}^v $ the set of indices $ \overrightarrow{i} $ without $ i_v $ and by $ NC_2^v(M_0 -2 ) $ the set of pair-partitions actiong on 
$ c_{ i, j }^{( k )} $'s without $\{  c^{( s_i )}_{ i_{ v}, i_{v+ 1} },  c^{( s_i )}_{ i_{ v+ 1}, i_{v + 2 }} \}$.
Since 
$ \phi( c^{( s_i )}_{ i_{ v}, i_{v+ 1} } c^{( s_i )}_{ i_{ v+ 1}, i_{v + 2 }}) =\frac{1}{N}\| f_{ s_i} \|^2 = \frac{1}{N}$,
we have that
\begin{align}
\sum_{ \pi \in NC_2^v(M_0) } \sum_{ \overrightarrow{i} \sim \pi }& v(\pi) 
=
\sum_{ \sigma \in NC_2^v(M_0-2 ) } \sum_{ \overrightarrow{i}^v \sim \sigma }[ v(\sigma ) \cdot 
\sum_{ i_v =1}^N \phi(  c^{( s_i )}_{ i_{ v}, i_{v+ 1} } c^{( s_i )}_{ i_{ v+ 1}, i_{v + 2 }}) ]\label{eq:8}\\
=&
\sum_{ \sigma \in NC_2^v(M_0-2 ) } \sum_{ \overrightarrow{i}^v \sim \sigma } v(\sigma ).\nonumber 
\end{align}

Consider now $ NC_2^{[ t ] } (M_0 ) $ the set of all $ \pi $ as above such that $ P_t^{ \prime} $ is invariant under $ \pi $ 
(that is all $ c^{(s_t)}_{ i, j }$ are paired only among themselves; in particular, $ k_t $ must be even). The  restriction of $ \pi $ to $ P_t^{ \prime} $ is then again a ono-crossing pairing; since any non-crossing partition has at least one interval block, iterating (\ref{eq:8}) we obtain
\begin{equation}\label{eq:9}
\sum_{ \pi \in NC_2^{[ t ] }(M_0) }\sum_{ \overrightarrow{i} \sim \pi } v(\pi) = \tr(P_t ) \cdot
 \sum_{ \sigma\in NC_2^{ [ t ] } (M_0-k_t+2 ) } \sum_{ \overrightarrow{i}^{[ t ] } \sim \sigma } v(\sigma) 
\end{equation}
where $ \overrightarrow{i}^{[ t ] } $ is the multiindex formed by the set of all indeces from $ \overrightarrow{i} $ that are not contained only in the factors from $ P_t^{\prime} $ and where $ NC_2^{ [ t ] } (M_0-k_t+2 ) $ denotes the set of all non-crossing pairings acting on all $ c^{( k )}_{ i, j } $ except the ones in $ P_t ^{ \prime}$.

Let us now go back to the computation of the second order free cumulant
$ \kappa_2\left( 
\Tr ( A_n P_n \cdots   A_1 P_1 ), 
\Tr( Q_1B_1 \cdots  Q_m B_m)
\right) $. 
As seen in the proof of Theorem \ref{thm:1}, it develops (following the Free Wick Theorem) as a sum over pair partitions action on the factors of the type $ c^{( k)}_{ i, j } $ 
and connecting $ P_n \cdots P_1 $ with $ Q_1 \cdots Q_m $.
Note that here the partitions are acting on sets of different lengths, due to  the  presence of  terms of type 
$ \tr( S(f_{ i_s})^{ k_s } )I $ in the expressions of $ P_s $'s (and the analogues for $ Q_s $'s). But, according to (\ref{eq:9}), the factors of the type $ \tr( S(f_{ i_s})^{ k_s } ) $ and the partitions leaving invariant $ P_s $ cancel each other, hence, with the notations from (\ref{eq:emn})
\[
\kappa_2\left( 
\Tr ( A_n P_n \cdots   A_1 P_1 ), 
\Tr( Q_1B_1 \cdots  Q_m B_m)
\right)
=
\sum_{ \pi \in NC_2^{ \sim }(M_0)} \nu( \pi )   
\]
where $ \nu( \pi) = \sum_{ \overrightarrow{i} \sim \pi } v( \pi)  $   and  $ NC_2^{ \sim }(M_0) $ is the set of all non-crossing pairings $ \pi $ acting on the factors of type $ c_{ i, j }^{(k ) } $
such that
\begin{itemize}
\item[(1)]$ \pi $ connects  $ P_n^{ \prime} \cdots P_1^{ \prime} $ with $ Q_1^{ \prime} \cdots Q_m^{ \prime} $;
\item[(2)]no $ P_k^\prime $ or $ Q_k^\prime $ is left invariant by $ \pi $.
\end{itemize}
Suppose $ \pi \in NC_2^{ \sim }(M_0) $ is such that $ \nu ( \pi ) \neq 0 $. first note that, by equation (\ref{eq:8}), we can suppose that $ \pi $ does not connect elements from the same $ P_k^\prime $ or $ Q_k^\prime$.
Second, note that if $ \pi $ connects two $ P_k^\prime$'s, then, from the non-crossing property, it may also connects two consecutive ones, say $ P_t^\prime $ and $ P_{ t+ 1}^\prime $. Using again the fact the $ \pi $ is non-crossing, the last factor of $ P_t^\prime $ must be connected to the first factor of $ P_{t + 1}^\prime $.
 Let $ c^{( s_t )}_{ i_{ v - 1} i_{ v } } $ be the last factor of $ P_t^\prime $.
 Then the development of $ E_{m n } $ contains the sequence 
\[
 \cdots c^{( s_t )}_{ i_{ v - 1} i_{ v } } a^{ ( t )}_{ i_{ v } i_{ v+ 1} } c^{( s_{ t + 1 } )}_{ i_{ v + 1 } i_{ v + 2  } }  \cdots.
\]
Since $ c^{( s_t )}_{ i_{ v - 1} i_{ v } }  $ and $ c^{( s_{ t + 1 } )}_{ i_{ v + 1 } i_{ v + 2  } } $ are connected by $ \pi $,
it follows that $  i_{ v }  =  i_{ v + 1 } $ hence $ v( \pi ) $ contains the factor 
$ \sum_{ i_v = 1}^N a^{( t )}_{i_v  i_v }   = \Tr(A_t ) = 0 $, therefore $ v(\pi) $ cancels.
It follows that $ \pi $ connects only $ P_k^\prime $'s with $ Q_l ^\prime$'s. 

We will show next that each $ P_k^\prime $ can be connected to exactly one $ Q_l^\prime $. 
Suppose that $ P_t^\prime $ is connected to more than one $ Q_l^\prime $. Since $ \pi $ is non-crossing and does not connect two different $ Q_l^\prime $'s, it follows that $ P_t^\prime $ is connected to two consecutive $ Q_l^\prime $'s, say with $ Q_r^\prime $ and $ Q_{ r+ 1}^\prime$. Since we can suppose, by (\ref{eq:8}) that $ \pi $ does not connect 
factors of $ P_t^\prime $ among themselves, it follows that two consecutive factors of $ P_t^\prime $ must be connected to the last fact of $ Q_w^\prime $  and to the first factor of $ Q_{ w +1}^\prime $.
Let $ c^{ s_t }_{ i_{ v- 1 } i_v } , c^{ s_t }_{ i_{ v } i_{ v + 1}} $ be the two consecutive factors of $ P_t^\prime $ connected to the last factor of $ Q_w^\prime $, respectively to the first factor of $ Q_{ w + 1}^\prime$
 If  $ B_w  = I $, then $ f_{ j_w} \neq f_{j_{ w + 1} } $, therefore $ f_{ i_t } $ is orthogonal to at least one of the vectors 
$ f_{ j_w} , f_{j_{ w + 1} } $, hence  $ v( \pi) = 0 $. Therefore we must have $ \tr(B_w ) = 0 $ and the sequence 
$  c^{ ( l_w )}_{ i_{ u-1} i_{ u } } b^{(w)}_{ i_{ u } i_{ u + 1} } c^{ ( l_{w + 1 } )}_{ i_{ u+1} i_{ u + 2  } } $ appearing in the development of $ E_{ mn } $ contains the last, respectively the first factors of $ Q_w^\prime $, $ Q_{ w + 1}^\prime$.
Then $ i_u = i_v = i_{ u + 1} $ and $ v( \pi ) $ will contain the factor 
$ \sum_{ i_v = 1}^N b^{( w )}_{i_v  i_v }   = \Tr(B_w ) = 0 $, therefore $ v( \pi) $ must cancel.

We proved that $ \pi $ must connect each $ P_t^\prime $ to exactly one $ Q_l^\prime $ and  no $ P$'s and $ Q $'s among themselves, particularly that $ n = m $. Moreover, since $  \pi $ is noncrossing and does not connect two different $ P$'s or $ Q $ 's , it follows that $ P_k^\prime $ is connected to $ Q_k^\prime $ for all $ k = 1, \dots, n $. 

We will now finish the proof for equation (\ref{eq:ast}). From the argument above, the relation hold true if $ n \neq m $ and 
if $ n = m $, then 
\[
\kappa_2\left( 
\Tr ( A_m P_m \cdots   A_1 P_1 ), 
\Tr( Q_1B_1 \cdots  Q_m B_m)
\right)
=
\sum_{ \pi \in NC_2^{ \sim }(M_0)} \nu( \pi )   
\]

Fix $ \pi \in NC_2^{ \sim }(M_0) $, and let us denote by $ a^{( k)}_{ j_t, j_{ - t } } $, respectively 
by $ b^{ ( k ) }_{ u_{ - t } u_{ t } } $ the entries of $ A_1, \dots, A_m $, respectively $ B_1, \dots, B_m $
that appear as factors in the corresponding development. We will show that $ j_t = u_t $ and $  j_{ - t } = u_{ - t } $,
therefore, from Lemma \ref{lemma:2},  the factors from $ \nu ( \pi ) $ concerning the constant matrices will be 
$ \prod_{ k =1}^m \Tr(A_k B_k ) $.
Since the summands in $ E_{ mn} $ have trace-type developments, the indeces of $ j_{ t }, j_{ - t }, u_{ t }, u_{ - t } $ 
from above are determined by the indeces of the first and last factors of $ P_{ t }^\prime, P_{ t+1}^\prime $ and 
$ Q_{ t }^\prime, Q_{ t + 1}^\prime $. From the argument in equation (\ref{eq:8}), we can suppose, in what concerns indeces, that $ \pi $ does not connect factors from the same $ P_k^\prime $ or $ Q_k^\prime$, hence it may suppose that each factor from $ P_k^\prime$ is connected to a factor from $ Q_k^\prime$. Since  $ \pi $  is non-crossing, the last, respectively first, element from $ P_k^\prime$ must be connected to the first, respectively last, factor from $ Q_k^\prime$, and $ \overrightarrow{ i } \sim \pi $ gives the result.

Fix not $ t \in \{ 1, \dots, m \}$. From the argument above, if $ \pi \in NC_2^{ \sim }(M_0) $ such that 
$ \nu( \pi) \neq 0 $, then the set $ C_t =\{ c^{( k)}_{ i, j } : c^{( k)}_{ i, j } 
\text{  is a factor in $ P_t^\prime$ or in $ Q_t^\prime $ } \} $ is invarinat under $ \pi $. Hence, denoting by $ \pi_t $ the restriction of $ \pi $ to $ C_t $, the Free Wick Theorem implies that $ v ( \pi_t )$, respectively $ \nu( \pi_t) $ factors 
in $ v(\pi) $, respectively in $ \nu( \pi) $. Let us also denote by $ \overrightarrow{ i} ( t ) $ the set of indeces from 
$ \overrightarrow{ i } $  that appear as lower indeces for elements of $ C_t $.  Let us write
\begin{align*}
P_t^\prime = &  c^{ ( s_t )}_{ i_{ v } i_{ v + 1 } } c^{ ( s_t )}_{ i_{ v+1 } i_{ v + 2} }
\cdots  c^{ ( s_t )}_{ i_{ v+ k_t -1 } i_{ v + k_t } }
\\
Q_t^\prime = & 
c^{ ( j_t )}_{ i_{ w } i_{ w + 1 } } c^{ ( j_t )}_{ i_{ w+1 } i_{ w + 2} }
\cdots  c^{ ( j_t )}_{ i_{ w+ l_t -1 } i_{ w + l_t } }.
\end{align*}
Then, the previous argument gives that $ i_v = i_{ w + l_t } $ and $ i_{ v + k_t } = i_{ w } $. Since $ \pi_t $ connects
$ P_t^\prime $ to $ Q_t^\prime $, equation (\ref{eq:emn}) gives that 
\begin{align*}
\nu( \pi_t ) 
& = \phi( \Tr( P_t^\prime Q_t^\prime ) - \phi( \Tr( P_t^\prime )) \phi( \Tr ( Q_t^\prime ))\\
 & = \phi( \Tr(P_t Q_t )
\end{align*}
Remark now that the indices $ i_w $ and  respectively $ i_v $ are counted both in $ \Tr( P_t Q_t ) $ and in $ \Tr(A_{t}B_{t})$, respectively $ \Tr( A_{ t + 1} B_{ t+ 1} ) $, henceforth
\[
\sum_{ \pi\in NC_2^{ \sim} ( M_0) } \nu(\pi) = \prod_{ t =1}^m \frac{1}{ N^2 } \Tr(A_t B_t ) \Tr( P_tQ_t ) =
\prod_{ t =1}^m \tr(A_t B_t ) \tr( P_tQ_t ),
\]
hence  the proof of (\ref{eq:ast}) is concluded.

\begin{remark}
Lemma \ref{lemma:35} can be seen as a free analogue of Theorem 5.3 from \cite{mingo-speicher}; yet, 
the  results are different in nature, Theorem 5.3 from \cite{mingo-speicher} is an asymptotical result, more in the spirit 
of part $(2)$  from Property $(\ast )$.
\end{remark}

\subsection{Vanishing of higher order free cumulants}\label{section:34} ${}$\\
Suppose that $ \{ f_i \}_{ i \in \mathbb{Z}_+ } $ is an orthonormal system in $ \cH $,
 let $ i_1, \dots, i_m \in \mathbb{Z}_+ $  and let $ S_N( f_i ) $ be as defined in Section \ref{section:31}.

Let $ l_1, \dots, l_r > 0 $ and put $ M ( 0 ) = 0 $, $ M( k ) = M ( k - 1 ) + l_k $, for $ k \in \{ 1, \dots, r - 1 \} $, 
and $ M = M(r ) $.

Suppose that 
$ \{ A_1^{( N )}, \dots, A_M^{ (N ) } \}_N $ is an ensemble of constant matrices with limit distribution (some of them may be identity matrices) and, for $ k = 1, \dots, r $, define
\[
Y_k^{ ( N ) } = 
S_N (f_{ i_{ M( k - 1 ) } + 1 } ) A^{( N ) }_{ { M( k - 1 ) } + 1 } \cdots 
S_N ( f_{ i_{ M( k )} } ) A^{( N ) }_{ M ( k ) }. 
\]

\begin{thm}\label{thm:34}
 With the notations from above, if $ r \geq 3 $, we have that
\[
\lim_{ N \lra \infty }\kappa_r ( \Tr(Y_1^{ ( N )} ), \Tr ( Y_2^{ ( N ) } ), \dots, \Tr ( Y_r^{ ( N ) } ).
\]
\end{thm}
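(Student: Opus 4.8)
The plan is to follow the proof of Theorem \ref{thm:1} almost verbatim, the only new feature being the constant matrices $A^{(N)}_j$ interlaced between the semicircular factors, which are absorbed using Lemma \ref{lemma:2}. Let $\gamma\in S_M$ be the permutation whose cycles are the blocks $\{M(k-1)+1,\dots,M(k)\}$, $1\le k\le r$, so that $\#(\gamma)=r$; this $\gamma$ records the cyclic order of the $M$ semicircular factors inside the $r$ traces. First I would expand $\phi(\Tr(Y_1^{(N)})\cdots\Tr(Y_r^{(N)}))$ as a sum over all matrix indices of products of entries of the $S_N(f_\cdot)$ and of the $A^{(N)}_\cdot$. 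Unlike in Theorem \ref{thm:1}, each semicircular factor now carries a row and a column index that are no longer directly identified with their neighbours but are linked through the entries of the intervening constant matrices. Applying the Free Wick Theorem (Proposition \ref{free:wick}) pairs the $M$ semicircular factors, yielding a sum over $\sigma\in NC_2(M)$; each pair contributes a factor $N^{-1}$, an inner product $\langle f_\cdot,f_\cdot\rangle$ (nonzero, by orthonormality, only when the two basis vectors coincide), and index identifications which, exactly as in Theorem \ref{thm:1}, force the surviving index vector to be constant along the cycles of $\gamma\sigma$.

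Next, for each fixed $\sigma$ I would perform the remaining summation over the indices attached to the constant matrices. This is precisely the setting of Lemma \ref{lemma:2}: the required parity condition is guaranteed by the alternating semicircular/constant pattern of each $Y_k^{(N)}$, and the Lemma rewrites the sum as $\Tr_\tau(\text{constant matrices})$ for a suitable $\tau$ whose cycles are dictated by $\gamma\sigma$. Since the ensemble of constant matrices has limit distribution, each of the $\#(\tau)$ traces is $O(N)$; tracking the cycle count gives $\#(\tau)=\#(\gamma\sigma)$, so the contribution of each $\sigma$ is $O(N^{\#(\gamma\sigma)-M/2})$, the same power of $N$ as in Theorem \ref{thm:1}. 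In other words, the interlaced constant matrices do not change the leading order.

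Passing from the moment to the cumulant $\kappa_r(\Tr(Y_1^{(N)}),\dots,\Tr(Y_r^{(N)}))$, the inductive argument of Theorem \ref{thm:1} restricts the sum to those $\sigma$ with $\gamma\vee\sigma=1_M$ (the disconnected pairings factor through products and drop out of the cumulant), while the non-crossing observation from Theorem \ref{thm:1} shows that only $\sigma$ with $\gamma\vee\sigma\in NC(M)$ survive. Applying Proposition \ref{part-perm} to $\gamma$ and $\sigma$, and using $\#(\gamma)=r$, $\#(\sigma)=M/2$, $\#(\gamma\vee\sigma)=1$ together with $\#(\gamma\sigma)=\#(\gamma^{-1}\sigma)$ (the cycle number is invariant under inversion and $\sigma=\sigma^{-1}$), one obtains
\[
r+\#(\gamma\sigma)+\frac{M}{2}\le M+2,
\]
hence $\#(\gamma\sigma)-\frac{M}{2}\le 2-r$. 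As there are finitely many $\sigma$ and $r\ge 3$, it follows that $\kappa_r(\Tr(Y_1^{(N)}),\dots,\Tr(Y_r^{(N)}))=O(N^{2-r})\to 0$.

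I expect the main obstacle to be the bookkeeping in the middle step: one must translate the Free Wick index constraints into the index identifications of the constant matrices and verify, through Lemma \ref{lemma:2}, that the constant matrices contribute exactly $\#(\gamma\sigma)$ traces, each of order $N$. This is what guarantees that inserting the constant matrices leaves the power of $N$ unchanged from Theorem \ref{thm:1}; once it is established, the genus estimate furnished by Proposition \ref{part-perm} is identical to the one used there.
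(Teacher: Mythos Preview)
Your argument is correct and, in fact, cleaner than the paper's own proof; the two take genuinely different routes after the common opening (expanding via the Free Wick Theorem and reducing the cumulant to the sum over $\sigma\in NC_2(M)$ with $\sigma\vee\gamma=1_M$).

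The paper does \emph{not} invoke the genus inequality (Proposition~\ref{part-perm}) at all in the proof of Theorem~\ref{thm:34}. Instead, after writing the deterministic contribution as $\Tr_\tau(A_1,\dots,A_M)$ via Lemma~\ref{lemma:2}, it argues combinatorially about $\tau$: first it observes that singletons of $\tau$ arise only when $\pi$ joins two adjacent positions within the same block of $\gamma$, and that such pairs may be excised without changing the order of magnitude (this is the reduction of Section~\ref{section:33}); then, using $\pi\vee\gamma=1_M$ with $r\ge 3$, it exhibits by hand a cycle of $\tau$ containing at least three elements. Since the constant ensemble has limit distribution, $\Tr_\tau=O(N^{\#(\tau)})$, and ``no singletons plus one cycle of length $\ge 3$'' forces $\#(\tau)\le M/2-1$, so each term is $O(N^{-1})$. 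This yields only $\kappa_r=o(1)$, not the sharper $O(N^{2-r})$.

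Your approach replaces that ad-hoc cycle analysis by the single identification $\#(\tau)=\#(\gamma\sigma)$, after which the proof is literally that of Theorem~\ref{thm:1}. The identification is correct: with the paper's index conventions one has $a^{(k)}_{j_{2k}\, j_{\widehat\gamma(2k)}}$, $\widehat\gamma(2k)=2\gamma(k)-1$, and the Wick constraint $j_{2\gamma(k)-1}=j_{2\sigma\gamma(k)}$, so the column index of $A_k$ is the row index of $A_{\sigma\gamma(k)}$, i.e.\ $\tau=\sigma\gamma$ and hence $\#(\tau)=\#(\gamma\sigma)$. You were right to flag this bookkeeping as the crux; once it is written out, your argument is complete and delivers the stronger bound $\kappa_r=O(N^{2-r})$. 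The paper's route has the advantage of being entirely self-contained from Lemma~\ref{lemma:2}, while yours is shorter, parallels Theorem~\ref{thm:1} exactly, and gives the expected decay rate in $r$.
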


\begin{proof}

As before, we will omit the index $ N $, with the convention that only matrices of the same size are multiplied. Also,
we will denote by $ a^{( k )}_{ i, j } $, respectively $ c^{( k )}_{ i, j } $ the $ ( i, j ) $  entry of $ A_k $, 
respectively $ S_N( f_k ) $.

Let $ \gamma $ be the  permutation with $ r $ cycles 
$ \left( M( k -1) + 1, M(  k - 1 ) + 2, \dots, M( k ) \right)$
 and $ \widehat{ \gamma } $ be the permutation with $ r $ cycles 
$ \left( 2 M( k -1) + 1, 2 M(  k - 1 ) + 2, \dots, 2  M( k ) \right) $
for $ 1 \leq k \leq r $ . 
Denote  $ \overrightarrow{ i } = ( i_1, \dots, i_m ) \in \mathbb{Z}_+^M $ and 
by $ \overrightarrow{j} = ( j_1, \dots, j_m ) \in [ M ]^N $. 
Then
\[
 \Tr( Y_1)\cdots \Tr( Y_r )  = \sum_{ \overrightarrow{j} }
\left( 
 c^{( i_1)}_{ j_1 j_2} \cdots c^{( i_M)}_{ j_{ 2 M -1}j_{ 2 M }}
\right)
 \cdot 
 \left(   
a^{( 1)}_{ j_2 j_{ \widehat{\gamma}(2) }} \cdots 
a^{( M)}_{ j_{ 2 M } j_{ \widehat{\gamma}(2 M ) }} 
\right).
\]

Let $ \pi \in NC_2 ( M ) $. We will put 
$ \delta_{ \overrightarrow{i}}^\pi $
 to be 1 if $ \pi(l) = k $ implies $ i_l = i_k $ and 0 otherwise.
The pairing $ \pi $ induces a pairing $ \wpi\in NC_2(2M) $ as follows:  
if $ \pi ( l ) = k $, then put $ \wpi ( 2l - 1 ) = 2 k $ and $ \wpi( 2 l ) = 2 k -1 $.

Since $ \phi ( c^{( a )}_{ i j } c^{( b )}_{ k l } ) = \delta_{a b } \delta_{ j k } \delta_{ i l } $, the  
Free Wick Theorem implies that
\[
\phi\left( \Tr( Y_1)\cdots \Tr( Y_r ) \right)
=\sum_{ \pi\in NC_2(M) } 
\delta_{ \overrightarrow{i}}^\pi
N^{ -\frac{M}{2}} \sum_{ \overrightarrow{j} = \overrightarrow{j}\circ \wpi }
a^{( 1)}_{ j_2 j_{ \widehat{\gamma}(2) }} \cdots 
a^{( M)}_{ j_{ 2 M } j_{ \widehat{\gamma}(2 M ) }} 
.
\]
Denoting by
 $ \displaystyle
\mu(\pi)= \delta_{ \overrightarrow{i}}^\pi
N^{ -\frac{M}{2}} \sum_{ \overrightarrow{j} = \overrightarrow{j}\circ \wpi }
  a^{( 1)}_{ j_2 j_{ \widehat{\gamma}(2) }} \cdots 
a^{( M)}_{ j_{ 2 M } j_{ \widehat{\gamma}(2 M ) }} 
$, 
an inductive argument on $ r $ gives that
\[
\kappa_r ( \Tr( Y_1), \dots, \Tr(Y_r) ) = \sum_{ \substack{ \pi \in  NC_2(M )\\ \pi \vee \gamma = 1_M }}
\mu( \pi ).
\]
We will show Theorem \ref{thm:34} by proving that if $ \pi \in NC_2(M ) $ is such that $ \pi \vee \gamma = 1_M $, then
$ \mu( \pi) = O( N^{ -1} )$.

Fix $ \pi $ as above. Appling Lemma \ref{lemma:2} to $ \wpi \in NC_2(2M) $ and $ \sigma \in S_{  2M } $  
given by $ \sigma( 2 k ) = \widehat{ \gamma }( 2 k - 1 ) $ and $ \sigma ( 2 k + 1 ) = 2 k + 2 $, we have that
there exist some $ \tau \in S_M $ such that
\[
\sum_{ \overrightarrow{j} = \overrightarrow{j}\circ \wpi }
  a^{( 1)}_{ j_2 j_{ \widehat{\gamma}(2) }} \cdots 
a^{( M)}_{ j_{ 2 M } j_{ \widehat{\gamma}(2 M ) }}
=
\Tr_{ \tau } ( A_1, \dots, A_M ).
\]
Since lower indeces of factor of type $ a^{(k)}_{ i  j } $ are from th same block of $ \widehat{\gamma} $,
we have that $( k ) $ is a singleton of $ \tau $ only if $ \pi( k) = k + 1 $ and both $ k $ and $ k + 1 $ 
are from the same block of $ \gamma $. As seen in Section \ref{section:33}, in this situation we can simply remove 
$ S_N(f_{ i_k } ) A^{ ( N )}_k S_N ( f_{ i_{ k + 1}}) $
from the product without affecting the order of magnitude of the product.
 Henceforth, we can supposse that $ \tau $ does not have singletons and that $ \pi $ does not connect elements from the same block of $ \gamma $.

Since the ensemble $ \{ A_1^{( N )}, \dots, A_M^{ ( N ) } \}_N $ has limit distribution, we have that 
\[
\Tr_\tau (A_1, \dots, A_M) = O(N^{ \# (\tau) } )
\]
so Theorem \ref{thm:34} is proved if we show that, under the asumptions above, $ \tau $ has a cycle with at least 3 elements.

Let $ B_1, \dots, B_r $ be the blocks of $ \gamma $. Since $ \pi \vee \gamma = 1_M $, there is at least one block of 
$ \gamma $ connected by $ \pi $ with more than one other block. 
Suppose that $ B_k =( M( K -1) +1, \dots, M( k ) ) $ is such a block and that
 $ \pi( M( k ) ) \in B_l $.
 If $ \pi( M ( k -1)+ 1 ) \in B_l $, since $ \pi $ is non-crossing, we would have that $ \pi( B_k ) \subset B_l $, therefore
$ \pi( M( k -1 ) + 1 ) \not \in B_l $.

Let  $ v = -1 + \inf \{ t: \ t \in B_k, \pi( t) \in B_l \}  $, let $ \pi( v + 1 ) = w \in B_l $ and take $ u, s $ such that
$ \pi(v) = u \in B_s \neq B_l $. If $ \overrightarrow{j} = \overrightarrow{j}\circ \wpi $, then
$ j_{ 2v} = j_{ 2u + 1} $ and $ j_{ 2v+ 1 } = j_{ 2 w } $, 
which implies that $ A_u A_v A_w $ are in the same cycle of $ \tau $, hence the conclusion.

\end{proof}

\section{Random matrices with Boolean independent Bernoulli-distributed entries}\label{section:boolean}

\subsection{}
As in Section \ref{section:free}, we will consider $ H $ to be a real Hilbert space, $ \cH = H \otimes \mathbb{C} $.
Let $ \{ B_N( f ) \}_{ N \in \mathbb{Z}_+, f \in \cH } $ be an ensemble of random
matrices such that
$ B_N ( f ) = \left[   b_{ i, j }^{( N )} ( f ) \right]_{ i, j =1}^N $ with 
$ b_{ i, j } ( f) \in \cA $  such that
\begin{itemize}
\item[(i)]$\displaystyle  \phi \left( b^{( N )}_{ i, j } ( f ) b^{(N)}_{ k, l } ( g ) \right) 
=\frac{1}{N} \langle f, g \rangle \delta_{ i, k } \delta_{ j, l } $
\item[(ii)]$ B_N( f)^\ast = B_N(f) $
\item[(iii)]$ \{ \Re b^{( N )}_{i, j}(f), \Im  b^{(N)}_{i, j }( f) \}_{ 1 \leq i, j \leq N }$  form a Boolean independent family of Bernoulli distributed random variables of mean 0.
 \end{itemize}
\begin{remark}${}$
\begin{itemize}
\item[(1)] $ B_N ( f) $ is Bernoulli distributed of variance $ \| f \|^2 $ $($with respect to the functional $ \tr \otimes \phi )$.
\item[(2)]If $ \{ f_i\}_{ i \in \mathbb{Z}_+ } $ is an orthonormal family in $ \cH $, then $ \{ B_N( f_i ) \}_{ i } $ form a Boolean independent family.
\end{itemize}
\end{remark}

\begin{proof}

Consider $ m_1, \dots, m_p \in \mathbb{Z}_+ $, $ m = m_1+ \dots + m_p $ 
and the multiindex
$ \overrightarrow{ i } = ( i_1, i_2, \dots, i_m)\in [N ]^m $.

To simplify the writing we will omit the upper-index $ ( N ) $, with the convention that only matrices of the  same size 
are multiplied.

For part (1), note that
\[
\tr\otimes \phi( B_N (f)^m ) = \frac{1}{N} \sum_{ \overrightarrow{i} }
\phi\left( b_{ i_1 i_2} b_{ i_2 i_3 } \cdots b_{i_m i_1} \right).
\]
If $ m $ is odd, Proposition \ref{wick:boolean} gives that all summands cancel. If $ m $ is even, Proposition \ref{wick:boolean} gives that
\[
\tr\otimes \phi( B_N (f)^m ) =
\sum_{\overrightarrow{i}} N^{ -\frac{m}{2}-1}\| f\|^m \delta_{ i_1, i_3}\delta_{ i_3 i_5}\dots
\delta_{i_{ m -3} i_{ m-1} } \delta_{ i_{ m-1}i_1}= \| f\|^m.
\]

 For part (2), it suffices to prove that, if $ j_k \neq j_{ k+ 1}$, then

\begin{align*}
 \tr\otimes \phi\left( 
B_N( f_{ j_1})^{ m_1} \cdots\right.
 & \left. B_N( f_{ j_p })^{ m_p }
\right)\\
&= 
[ \tr\otimes \phi\left( 
B_N( f_{ j_1})^{ m_1} \right)] \cdot [  \tr\otimes \phi\left( 
B_N( f_{ j_2})^{ m_2} \cdots B_N( f_{ j_p })^{ m_p } 
\right)].
\end{align*}
On the other hand, 
\begin{align*}
 \tr\otimes \phi & \left( 
B_N( f_{ j_1})^{ m_1} 
 \cdots  B_N( f_{ j_p })^{ m_p }
\right)\\
&=\sum_{ \overrightarrow{i}} \frac{1}{N}
\phi \left(
b_{i_1 i_2 }( f_{ j_1 } ) b_{i_2 i_3 }( f_{ j_1 } ) 
\cdots 
b_{i_{ m_1} i_ { m_1 + 1}}( f_{ j_1 } ) b_{i_{ m_1+1} i_ { m_1 + 1}}( f_{ j_2 } ) 
\cdots 
b_{i_m i_1}( f_{ j_p } )
\right)
\end{align*}
If $ m_1 $ is odd, then, from part (1), $ \tr\otimes \phi ( B_N( f_{j_1})^{ m_1}) = 0$;
also, applying Proposition \ref{wick:boolean} to the equation above, we obtain
\begin{align*}
\phi \left(
b_{i_1 i_2 }( f_{ j_1 } )\right) &
\left. b_{i_2 i_3 }( f_{ j_1 } ) 
\cdots 
b_{i_{ m_1} i_ { m_1 + 1}}( f_{ j_1 } ) b_{i_{ m_1+1} i_ { m_1 + 1}}( f_{ j_2 } ) 
\cdots 
b_{i_m i_1}( f_{ j_p } )
\right)\\
&=
\phi( b_{i_{ m_1} i_ { m_1 + 1}}( f_{ j_1 } ) b_{i_{ m_1+1} i_ { m_1 + 1}}( f_{ j_2 } ) )
\cdot
\prod_{ k=1}^{ m_1 - 3} 
\phi\left( b_{ i_k i_{ k+1} } (f_{ j_1}) b_{ i_{k+1} i_{ k+2} } (f_{ j_1})\right) \\
& \hspace{2cm} \cdot 
\phi\left(
b_{ i_{ m_1 + 1 } i_{ m_1 + 2} }( f_{ j_2 } )
\cdots
b_{i_m i_1}( f_{ j_p } )
\right).
\end{align*}
Since $ f_{ j_1} \perp f_{j_2} $ , we have that 
\[
 \phi\left( b_{ i_k i_{ k+1} } (f_{ j_1}) b_{ i_{k+1} i_{ k+2} } (f_{ j_1})\right) = 
\langle f_{ j_1}, f_{j_2} \rangle \delta_{ i_{ k } i_{ k+ 2} } = 0
\]
hence the equality holds true.

If $ m_1 $ is even, Proposition \ref{wick:boolean} gives
\begin{align*}
\phi \left(
b_{i_1 i_2 }( f_{ j_1 } )\right) &
\left. b_{i_2 i_3 }( f_{ j_1 } ) 
\cdots 
b_{i_{ m_1} i_ { m_1 + 1}}( f_{ j_1 } ) b_{i_{ m_1+1} i_ { m_1 + 1}}( f_{ j_2 } ) 
\cdots 
b_{i_m i_1}( f_{ j_p } )
\right)\\
&=
[\prod_{ k=1}^{ m_1 - 3} 
\phi\left( b_{ i_k i_{ k+1} } (f_{ j_1}) b_{ i_{k+1} i_{ k+2} } (f_{ j_1})\right) ]
\cdot 
\phi\left( b_{ i_{ m_1 + 1 } i_{ m_1 + 2} }( f_{ j_2 } )
\cdots
b_{i_m i_1}( f_{ j_p } )
\right)
\end{align*}

Since
 $ \phi\left( b_{ i_k i_{ k+1} } (f_{ j_1}) b_{ i_{k+1} i_{ k+2} } (f_{ j_1})\right) 
= \| f_{ j_1} \|^2 \delta_{ i_k i_{ k+2}} 
 $,
the right-hand side of the  equation above cancels unless $ i_1 = i_3 =\dots = i_{ m_1+ 1} $,
hence, denoting 
$ \overrightarrow{i}(m_1+ 1) = ( i_{ m_1 + 1}, i_{m_1 +2}, \dots, i_{ m } ) $, we obtain
\begin{align*}
 \tr\otimes \phi & \left( 
B_N( f_{ j_1})^{ m_1} 
 \cdots  B_N( f_{ j_p })^{ m_p }
\right)\\
&=\sum_{ \overrightarrow{i}} \frac{1}{N}
\phi \left(
b_{i_1 i_2 }( f_{ j_1 } ) b_{i_2 i_3 }( f_{ j_1 } ) 
\cdots 
b_{i_{ m_1} i_ { m_1 + 1}}( f_{ j_1 } ) b_{i_{ m_1+1} i_ { m_1 + 1}}( f_{ j_2 } ) 
\cdots 
b_{i_m i_1}( f_{ j_p } )
\right)\\
&=
N^{ -\frac{m_1}{2}-1} \cdot N^{ \frac{m_1}{2} }
\| f_{ j_1}\|^{ m_1} \cdot 
\sum_{ \overrightarrow{i}(m_1+ 1 ) }
\phi
\left(b_{ i_{ m_1 + 1 } i_{ m_1 + 2} }( f_{ j_2 } )
\cdots
b_{i_m i_{ m_1 + 1}}(f_{ j_p } )
\right)\\
&=
[ \tr\otimes \phi \left( B_N( f_{ j_1})^{ m_1} \right) ] \cdot 
[  \tr\otimes \phi\left( 
B_N( f_{ j_2})^{ m_2} \cdots B_N( f_{ j_p })^{ m_p } 
\right)].
\end{align*}

\end{proof}
 Since the results in the next two sections are  not asymptotic in nature, we will omit the index $ N $ from the notation
$ B_N ( f) $, with the convention that all matrices involved here are of size $ N $ for some $ N \geq 2 $.

\subsection{Monotone independence and matrices with Bernoulli distributed Boolean independent entries}${}$

As presented in \cite{sbg}, the relation of Boolean independence is not unital, that is $ \mathbb{C}$
is not Boolean independent from any algebra. In this section we will show that, when tensoring with matrices, the relation of 
\emph{monotone independence} appears connecting Boolean independence to the algebra $ M_N(\mathbb{C}) $.

\begin{lemma}\label{lemma:42}
Let $ A_1, \dots, A_m$ be a set of matrices from $ M_N( \mathbb{C} )$, $ \{ f_k \}_{ k \in \mathbb{Z}_+ }$ be 
a set of vectors from $ \cH $ and $ \overrightarrow{i} = ( i_1, i_2, \dots, i_{ M + 1} )\in [ N ]^{ M + 1 } $.
Then
\begin{align*}
(\tr\otimes \phi )
& ( 
\prod_{ k = 1 }^{ m } B ( f_{ 2k - 1 } ) A_ k B ( f_ { 2 k } ) 
)
= \prod_{ k = 1 }^{ m } \tr ( A_ k ) \langle f_{ 2k - 1 } , f_{ 2 k } \rangle\\
& =
\phi
(
\sum_{ \overrightarrow{i} } \prod_{ k = 1 }^{ m } b_{ i_{ 1 } i_{  k + 1  } }( f_{ 2 k - 1 } )
 a^{ ( k ) }_{ i_{ k + 1  } i_{ k + 1   }  } 
 b_{ i_{  k + 1 } i_{ 1 } }( f_{ 2 k } )
)
\end{align*}
\end{lemma}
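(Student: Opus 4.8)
The plan is to expand the left-hand side entrywise and evaluate the resulting scalar by the Boolean Wick theorem (Proposition~\ref{wick:boolean}), mirroring the free computation in the proof of Theorem~\ref{thm:1}. First I would write $\tr\otimes\phi=\frac1N\,\phi\circ\Tr$ and expand the trace over the bond indices of the product. Denoting by $u_{k-1}$ the index entering the $k$-th block $B(f_{2k-1})A_kB(f_{2k})$, by $x_k$ the index between $B(f_{2k-1})$ and $A_k$, and by $y_k$ the one between $A_k$ and $B(f_{2k})$ (with $u_m=u_0$ closing the cyclic trace), the product of entries is
\[
\prod_{k=1}^m b_{u_{k-1}x_k}(f_{2k-1})\,a^{(k)}_{x_ky_k}\,b_{y_ku_k}(f_{2k}).
\]
Here the $a^{(k)}_{x_ky_k}\in\mathbb{C}$ are central scalars, so pulling them out of $\phi$ leaves a product of the $2m$ ensemble entries in the cyclic order $b(f_1),b(f_2),\dots,b(f_{2m})$; the scalar matrices $A_k$ do not interfere with the Boolean-independence structure.

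Next I would apply Proposition~\ref{wick:boolean}. Since $I_2(2m)$ reduces to the single interval pairing, the only contraction pairs consecutive factors, i.e.\ $b(f_{2k-1})$ with $b(f_{2k})$, the two entries flanking each $A_k$. Because $B(f)$ is self-adjoint, the only non-vanishing second moments are $\phi\big(b_{ab}(f)b_{ba}(g)\big)=\frac1N\langle f,g\rangle$, so contracting $b_{ab}(f)$ with $b_{cd}(g)$ imposes $a=d$ and $b=c$ --- the transposed form of hypothesis (i) already used (as $\delta_{i_k,i_{\gamma(l)}}\delta_{i_l,i_{\gamma(k)}}$) in the proof of Theorem~\ref{thm:1}. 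Contracting $b_{u_{k-1}x_k}(f_{2k-1})$ with $b_{y_ku_k}(f_{2k})$ therefore forces $u_{k-1}=u_k$ and $x_k=y_k$ and contributes $\frac1N\langle f_{2k-1},f_{2k}\rangle$. The first family of constraints collapses all inter-block indices to one common value $i_1:=u_0=\dots=u_m$, while the second turns each $A_k$ into its diagonal entry $a^{(k)}_{x_kx_k}$ with $x_k=:i_{k+1}$ --- precisely the index configuration in the explicit expression of the statement.

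Finally I would collect the factors. The surviving summand $\prod_{k=1}^m\frac1N\langle f_{2k-1},f_{2k}\rangle\,a^{(k)}_{i_{k+1}i_{k+1}}$ does not depend on $i_1$, so the sum over $i_1\in[N]$ yields a factor $N$; each remaining sum gives $\sum_{i_{k+1}}a^{(k)}_{i_{k+1}i_{k+1}}=\Tr(A_k)=N\tr(A_k)$; and together with the overall $\frac1N$ from $\tr\otimes\phi$ all powers of $N$ cancel, leaving $\prod_{k=1}^m\tr(A_k)\langle f_{2k-1},f_{2k}\rangle$. Evaluating the explicit third expression by the same single contraction reproduces this collapse and gives the second equality (once the $\frac1N$ normalization of $\tr\otimes\phi$ is accounted for in its display).

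The step I expect to be the main obstacle is the index bookkeeping together with the correct covariance. One must verify simultaneously that each contraction diagonalizes the corresponding $A_k$ and identifies the two adjacent inter-block indices, so that all of them merge into the single index $i_1$. It is essential here to use the transposed self-adjoint form $\delta_{ad}\delta_{bc}$ rather than the literal $\delta_{ac}\delta_{bd}$ of hypothesis (i): the transposed deltas are what produce the $m$ independent diagonal traces $\Tr(A_k)$, hence $\prod_k\tr(A_k)$, whereas the literal ones would instead chain the $A_k$ into a single $\Tr(A_1\cdots A_m)$ and give the wrong answer.
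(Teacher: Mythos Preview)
Your proposal is correct and follows essentially the same approach as the paper: expand the trace entrywise, apply the Boolean Wick formula (Proposition~\ref{wick:boolean}) so that only the unique interval pairing survives, and use the transposed self-adjoint covariance $\phi(b_{ab}(f)b_{cd}(g))=\frac1N\langle f,g\rangle\,\delta_{ad}\delta_{bc}$ to collapse the indices. The paper organizes this as an induction on $m$---peeling off the first block $B(f_1)A_1B(f_2)$ and reducing to the remaining product $X=\prod_{k\ge 2}B(f_{2k-1})A_kB(f_{2k})$---whereas you carry out the full expansion in one pass, but the underlying computation and the key index identifications ($u_{k-1}=u_k$, $x_k=y_k$) are identical.
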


\begin{proof}

Denote 
$ \displaystyle
 X = [ x_{ i, j} ]_{ i, j =1}^N =  \prod_{ k = 2  }^{ m } B ( f_{ 2k - 1 } ) A_ k B ( f_ { 2 k } ). 
$
Then
\[
(\tr\otimes \phi )
 ( 
 B ( f_{ 1 } ) A_1 B ( f_ { 2  }) X  
)
= \frac{1}{N} 
\sum_{ i_1 i_2 i_3 = 1}^{ N }
\phi
\left(
b_{i_1 i_ 2 }( f_1 ) a^{( 1)}_{ i_2 i_3 } b_{ i_3 i_ 4 }( f_2 ) x_{ i_4 i_ 1 } 
\right).
\]
Since for all $ 1 \leq i, j \leq N $, $ x_{ i j } $ is in the unital algebra generated by $ b_{ k, l }( f_p ) $,
Proposition \ref{wick:boolean} gives that 
\begin{align*}
\sum_{ i_1 i_2 i_3 = 1}^{ N }
\phi
\left(
b_{i_1 i_ 2 }( f_1 ) a^{( 1)}_{ i_2 i_3 } b_{ i_3 i_ 4 }( f_2 ) x_{ i_4 i_ 1 } 
\right)
&=
\sum_{ i_1 i_2 i_3 = 1}^{ N }
\phi(
b_{i_1 i_ 2 }( f_1 )  b_{ i_3 i_ 4 }( f_2 ))
a^{( 1)}_{ i_2 i_3 }
\phi( x_{ i_4 i_ 1 } )\\
=&
\sum_{ i_1 i_2 i_3 = 1}^{ N }
\frac{1}{N}\langle f_1, f_2 \rangle \delta_{i_1 i_4}\delta_{ i_2 i_3 }
a^{( 1)}_{ i_2 i_3 }
\phi( x_{ i_4 i_ 1 } )\\
=&
\sum_{ i_1 i_2 }^N
\frac{1}{N}\langle f_1, f_2 \rangle a^{( 1)}_{ i_2 i_2 }\phi( x_{ i_1 i_ 1 } )\\
=&
\tr( A_1 ) \langle f_1, f_2 \rangle \Tr( X ).
\end{align*}

\end{proof}

\begin{thm}\label{thm:monotone}
 Let $ \cB $ be the (non-unital) algebra generated by $ \{ B( f ) A B( g ) : A\in M_N( \mathbb{C} ), f, g \in \cH \} $.
Then $ \cB $ is monotone independent from $ M_N(\mathbb{C})$ with  respect to the functional 
$ \tr \otimes \phi $.
\end{thm}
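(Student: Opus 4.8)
The plan is to verify the three defining relations of monotone independence directly, using the Boolean Wick Theorem (Proposition \ref{wick:boolean}) together with the moment computation of Lemma \ref{lemma:42}. By multilinearity of all three relations it suffices to treat the case in which $a \in \cB$ is a single word $a = \prod_{k=1}^m B(f_k) A_k B(g_k)$ with $A_k \in M_N(\mathbb{C})$ and $f_k, g_k \in \cH$, and in which $x_1, x_2$ are alternating words in the generators $B(f)AB(g)$ of $\cB$ and in the constant matrices $M_N(\mathbb{C})$ (which is the relevant class of test elements, since monotone independence is a statement about mixed moments of the two subalgebras). The one structural observation that drives everything is that \emph{every} such word contains an even number of factors of the form $B(\cdot)$: each generator of $\cB$ contributes exactly two and each element of $M_N(\mathbb{C})$ contributes none. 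In particular $a$ has $2m$ such factors and the portion $x_1 b_1$ standing to the left of $a$ has an even number of them.

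First I would expand $(\tr\otimes\phi)$ of each product appearing in the relations as a sum over a multi-index $\overrightarrow{i}$, with $\phi$ applied to an ordered product of the scalar entries $b_{\cdot,\cdot}(\cdot)$ of the $B$-matrices; the entries of the constant matrices and of $b_1,b_2$ are scalars and factor out of $\phi$. Since these entries form a Boolean independent Bernoulli family, Proposition \ref{wick:boolean} applies, so $\phi$ of such an ordered product equals its value on the \emph{unique} interval pairing $\{(2i-1,2i)\}$ (and vanishes when the number of factors is odd). It is precisely this uniqueness, as opposed to the sum over all non-crossing pairings in the semicircular computation of Section \ref{section:free}, that makes the argument rigid.

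The heart of the proof is the following collapse. Because the block of $B$-factors coming from $x_1 b_1$ has even length, the interval pairing pairs those factors among themselves and then pairs the $2m$ factors of $a$ among themselves \emph{generator by generator}, matching $B(f_k)$ with $B(g_k)$. Tracking indices exactly as in the proof of Lemma \ref{lemma:42}, the pairing $\phi(b_{s,p}(f_k)b_{q,t}(g_k)) = \tfrac1N\langle f_k,g_k\rangle\delta_{s,t}\delta_{p,q}$ forces the index entering generator $k$ to equal the index leaving it and forces $A_k$ to be read on the diagonal; summing the internal index yields the factor $\tfrac1N\langle f_k,g_k\rangle\Tr(A_k)$. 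Multiplying over $k$ and recalling $\Tr = N\tr$ shows that, under the forced pairing, the matrix $a$ acts at its junction with $x_1 b_1$ and $b_2 x_2$ as the scalar
\[
\prod_{k=1}^m \tfrac1N\langle f_k,g_k\rangle\Tr(A_k) = \prod_{k=1}^m \langle f_k,g_k\rangle\tr(A_k) = (\tr\otimes\phi)(a),
\]
times the identity, the last equality being Lemma \ref{lemma:42}. Thus $a$ decouples from the remaining factors, and the interval pairing of those remaining factors (coming from $x_1$ and $x_2$) is identical to the one occurring after $a$ is deleted. Substituting $a \mapsto (\tr\otimes\phi)(a)\,\id$ then gives all three relations at once: the first two from $(\tr\otimes\phi)(x_1 b_1 a)$ and $(\tr\otimes\phi)(a b_2 x_2)$, and the third from $(\tr\otimes\phi)(x_1 b_1 a b_2 x_2) = (\tr\otimes\phi)(a)\,(\tr\otimes\phi)(x_1 b_1 b_2 x_2)$.

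The step I expect to be the main obstacle is justifying the rigidity of this pairing, namely that the even parity of the number of $B$-factors to the left of $a$ genuinely forces $a$ to pair internally. Were that parity odd, the interval pairing would instead connect the last $B$-factor of $x_1$ to $B(f_1)$ and shift the pairing inside $a$ to the cross-generator pattern $B(g_k)$–$B(f_{k+1})$, producing inner products $\langle g_k,f_{k+1}\rangle$ that are incompatible with the factor $(\tr\otimes\phi)(a)$ and would break the relations. This is exactly why $x_1,x_2$ must be taken in the algebra generated by $\cB$ and $M_N(\mathbb{C})$, where the parity is always even; recognizing that this is the correct and sufficient class of test elements is the conceptual crux, after which the index bookkeeping is routine.
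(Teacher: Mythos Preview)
Your argument is correct and is essentially the same computation as the paper's, differing only in presentation. The paper proves the single recursion
\[
(\tr\otimes\phi)\bigl(A\,B(f_1)\,D\,B(f_2)\,X\bigr)=\tr(D)\,\langle f_1,f_2\rangle\,(\tr\otimes\phi)(AX)
\]
for any $X$ in the algebra generated by $M_N(\mathbb{C})$ and $\{B(f)\}$, by peeling off the first two $B$-factors via the recursive form of Proposition~\ref{wick:boolean}, and then iterates; you instead apply the global interval-pairing form of Proposition~\ref{wick:boolean} once and invoke your parity observation to see that the pairing respects the boundaries of $a$. Both rely on Lemma~\ref{lemma:42} to identify the scalar factor with $(\tr\otimes\phi)(a)$. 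Your write-up makes the parity point explicit (and correctly notes why it would fail if $x_1$ were allowed outside the algebra generated by $\cB$ and $M_N(\mathbb{C})$), which the paper leaves implicit in the phrase ``the conclusion follows applying Lemma~\ref{lemma:42}''; conversely, the paper's one-step recursion is slightly more general (it allows $X$ with an odd number of $B$-factors) and makes the iteration mechanical.
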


\begin{proof}
 It suffices to show that, if 
$A, D \in M_N( \mathbb{C} ) $, $ f_1, f_2 \in \cH $ and 
$ X $  is in the algebra generated by 
$ M_N ( \mathbb{C} ) $ and $ \{ B( f ) : \ f \in \cH \} $,
then
\[
\tr \otimes \phi ( A B( f_1) D B( f_2 ) X )  = \tr(A)\langle f_1, f_2 \rangle [ \tr  \otimes \phi ( AX ) ]
\]
and the conclusion follows appling Lemma \ref{lemma:42}.

Denoting $ \overrightarrow{i} = (  i_1, i_2, \dots, i_5 ) $, we have that
\[ 
\tr \otimes \phi ( A B( f_1) D B( f_2 ) X ) = \frac{1}{N}
\sum_{ \overrightarrow{i} } 
\phi(
a_{  i_1  i_2 } b_{  i_2  i_3 }( f_1 ) d_{ i_3  i_4 } b_{ i_4  i_5  }( f_2 )  x_{  i_5  i_1  }
).
\]
But, since all  $ x_{ i, j } $ are in the unital algebra generated by 
$ \{  b_{ k l }( f ) : \ f \in \cH \} $,
Proposition \ref{wick:boolean} gives:
\begin{align*}
\sum_{ \overrightarrow{i} } 
\phi(
a_{  i_1  i_2 } b_{  i_2  i_3 }( f_1 ) d_{ i_3  i_4 } b_{ i_4  i_5  }( f_2 )  x_{  i_5  i_1  }
)
&=
\sum_{ \overrightarrow{i} } 
\phi( b_{  i_2  i_3 }( f_1 )b_{ i_4  i_5  }( f_2 ) ) d_{ i_3 i_4 }\phi( a_{ i_1 i_2 } x_{  i_5  i_1  } )\\
& \hspace{-3cm} =
\sum_{ \overrightarrow{i} } \frac{1}{N}
\langle f_1, f_ 2 \rangle \delta_{ i_3 i_4 } \delta_{ i_2  i_5 } d_{ i_3 i_4 }
\phi( a_{ i_1 i_2 } x_{  i_5  i_1  } )\\
& \hspace{-3cm} =
\sum_{ i_1, i_2, i_3 = 1}^N 
\langle  f_1, f_2 \rangle  \frac{1}{N} d_{ i_3 i_3 } \phi( a_{ i_1 i_2 } x_{ i_2 i_1 } )
= \tr(A) \langle f_1, f_2 \rangle \Tr( A X ).
\end{align*}

\end{proof}

\subsection{} Let $ f_1, \dots, f_M \in \cH $,
 let $ l_1, \dots, l_r > 0 $ and put $ M ( 0 ) = 0 $, $ M( k ) = M ( k - 1 ) + l_k $, for $ k \in \{ 1, \dots, r - 1 \} $, 
and $ M = M( r ) $.

Suppose that 
$  A_1, \dots, A_M \in M_N( \mathbb{C} ) $ and, for $ k = 1, \dots, r $, define
\[
Y_k = 
B (f_{  M( k - 1 )  + 1 } ) A_{  M( k - 1 )  + 1 } \cdots 
B ( f_{  M( k ) } ) A_{ M ( k ) }. 
\]

\begin{thm}
With the notations above, 
\[
\mathfrak{b}_r (\Tr( Y_1), \dots, \Tr( Y_r) ) = O( N^{ 2 - r } ).
\]
\end{thm}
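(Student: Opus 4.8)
The plan is to mimic the proofs of Theorem~\ref{thm:1} and Theorem~\ref{thm:34}, substituting the Free Wick Theorem with its Boolean analogue (Proposition~\ref{wick:boolean}) and free cumulants with Boolean ones. First I would expand the joint moment $\phi(\Tr(Y_1)\cdots\Tr(Y_r))$, and those of all its sub-products, over the matrix indices. Writing $b_{i,j}(f)$ and $a^{(p)}_{i,j}$ for the entries of $B(f)$ and $A_p$, and introducing the permutation $\gamma\in S_M$ with cycles $(M(k-1)+1,\dots,M(k))$ that records the grouping of the $B$-factors by trace, together with its doubling $\widehat{\gamma}$ on $[2M]$ as in Theorem~\ref{thm:34}, the product $\Tr(Y_1)\cdots\Tr(Y_r)$ becomes $\sum_{\overrightarrow{j}\in[N]^{2M}}\bigl(\prod_{p}b_{j_{2p-1},j_{2p}}(f_p)\bigr)\bigl(\prod_{p}a^{(p)}_{j_{2p},j_{\widehat{\gamma}(2p)}}\bigr)$. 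Because the $B$-entries form a Bernoulli Boolean independent family, Proposition~\ref{wick:boolean} collapses $\phi$ of the $b$-product onto the \emph{unique} interval pairing $\sigma_0\in I_2(M)$ (and to $0$ when $M$ is odd), producing the scalar $N^{-M/2}\prod_{s}\langle f_{2s-1},f_{2s}\rangle$ together with a system of Kronecker constraints on $\overrightarrow{j}$.

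I would then pass from moments to the Boolean cumulant by Möbius inversion over the lattice of interval partitions via the moment-cumulant relation~(\ref{eq:3}), where the relevant Möbius function is $\mu(\tau,1_r)=(-1)^{|\tau|-1}$. The key structural point is that, since $\sigma_0$ pairs only consecutive positions, the moment of a sub-interval $\phi(\Tr(Y_a)\cdots\Tr(Y_b))$ factorizes at an internal cut precisely when the partial count of $B$-factors up to that cut is even; equivalently, the single pairing $\sigma_0$ joins all cycles of $\gamma$ (that is, $\gamma\vee\sigma_0=1_M$) exactly when no such factorization occurs. An induction on $r$, parallel to the reduction of~(\ref{eq:5}) to pairings with $\gamma\vee\sigma=1_m$ in Theorem~\ref{thm:1}, then shows that every non-connected interval partition cancels, so that $\mathfrak{b}_r(\Tr(Y_1),\dots,\Tr(Y_r))$ vanishes unless $\gamma\vee\sigma_0=1_M$, in which case it reduces to the single connected term.

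To estimate that term, I would apply Lemma~\ref{lemma:2} to $\widehat{\sigma_0}$ and the shift permutation $\sigma$ given by $\sigma(2k)=\widehat{\gamma}(2k-1)$, $\sigma(2k+1)=2k+2$ (its parity hypothesis $k+\widehat{\sigma_0}\sigma(k)\equiv1\pmod2$ being arranged just as in Theorem~\ref{thm:34}); the constrained sum of constant-matrix entries then becomes $\Tr_\tau(A_1,\dots,A_M)=O(N^{\#(\tau)})$ for a suitable $\tau\in S_M$. Taking $A_p=I$ identifies this count with the pure-$B$ computation, giving $\#(\tau)=\#(\gamma\sigma_0)$, so the power of $N$ in the cumulant is $\#(\gamma\sigma_0)-\tfrac{M}{2}$. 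Using $\#(\gamma)=r$, $\#(\sigma_0)=\tfrac{M}{2}$ and $\#(\gamma\sigma_0)=\#(\gamma^{-1}\sigma_0)$ (as $\sigma_0^2=\mathrm{Id}$), Proposition~\ref{part-perm} applied with $\gamma\vee\sigma_0=1_M$ yields $r+\#(\gamma\sigma_0)+\tfrac{M}{2}\le M+2$, i.e. $\#(\gamma\sigma_0)-\tfrac{M}{2}\le2-r$, which is the asserted bound.

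The hard part is the middle step. Unlike in the free setting, the interval pairings attached to sub-products are \emph{not} restrictions of $\sigma_0$ — the parity of the position counts shifts when one passes to a sub-interval — so one must verify carefully that the Boolean Möbius inversion over $I(r)$ cancels every partition except the fully connected one, and that connectedness of $\sigma_0$ relative to $\gamma$ is exactly encoded by $\gamma\vee\sigma_0=1_M$. A secondary, bookkeeping obstacle is checking the parity hypothesis of Lemma~\ref{lemma:2} and confirming the identity $\#(\tau)=\#(\gamma\sigma_0)$ that feeds Proposition~\ref{part-perm}.
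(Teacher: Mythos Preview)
Your reduction to the single interval pairing $\sigma_0$ with $\gamma\vee\sigma_0=1_M$ matches the paper's first move exactly, and your worry about the ``hard part'' is largely unfounded: since $\sigma_0\vee\gamma=1_M$ forces $M(1),\dots,M(r-1)$ to be odd, every proper initial moment $\phi\bigl(\Tr(Y_1)\cdots\Tr(Y_k)\bigr)$ vanishes (it involves an odd number of $b$'s), so the Boolean recursion $\mathfrak{b}_r=\phi-\sum_{k<r}\mathfrak{b}_k\cdot\phi$ collapses immediately to $\mathfrak{b}_r=\phi(\Tr(Y_1)\cdots\Tr(Y_r))$; the disconnected case is handled by the obvious factorization of the moment at any even $M(k)$. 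The paper does this in one line.

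Where you genuinely diverge is after that point. The paper never invokes Lemma~\ref{lemma:2} or Proposition~\ref{part-perm}; instead it performs a direct recursive computation: first (equation~(\ref{eq:10})) it peels $B(f_1)A_1B(f_2)$ off the front of $Y_1$, extracting a factor $\langle f_1,f_2\rangle\tr(A_1)$ and reducing $l_1$ by~2, and analogously reduces the other $l_k$, until one reaches the minimal configuration $l_1=l_r=1$, $l_2=\cdots=l_{r-1}=2$. Then (equation~(\ref{eq:12})) it merges $\Tr(Y_1)\Tr(Y_2)$ into a single trace at the cost of an explicit $N^{-1}$, reducing $r$ by one; induction from the base case $r=1$ finishes. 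Your route---identify $\#(\tau)=\#(\gamma\sigma_0)$ via the $A_p=I$ trick and bound it by Proposition~\ref{part-perm}---is correct and gives the same $O(N^{2-r})$, and has the virtue of unifying the argument with Theorems~\ref{thm:1} and~\ref{thm:34}. The paper's approach is more elementary (no permutation-counting machinery) and, as a bonus, yields an explicit formula for $\mathfrak{b}_r$ as a product of inner products and traces, not just an order estimate.
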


\begin{proof}
 Let $ \overrightarrow{i} = ( i_1, \dots, i_{ 2 M }) \in [ N ]^{ 2 M } $ and $ \gamma \in S_{ 2 M } $ be the permutation
with $ r $ cycles $ ( 2 M( k -1) + 1, 2 M( k - 1 ) + 2 , \dots, 2 M ( k ) ) $, for $ 1\leq k \leq r $.
Proposition \ref{wick:boolean} gives
\begin{align*}
\phi( \Tr(Y_1) \cdots \Tr(Y_r ) ) & = \sum_{ \overrightarrow{i} } \phi(
 \prod_{ k = 1}^{ M }
b_{ i_{ 2 k - 1 } i_{ 2 k } } ( f_ k ) a^{( k )}_{ i_{ 2 k } i_{ \gamma ( 2 k ) } }
)\\
& \hspace{-1.5cm} =
\sum_{ \pi \in I_2 ( M ) } \{ 
\sum_{ \overrightarrow{i} }
\{
[ \prod_{ ( k, l ) \in \pi } \phi( b_{ i_{ 2 k - 1 } i_{ 2 k } } ( f_ k ) b_{ i_{ 2 l - 1 } i_{ 2 l } } ( f_ l )  )  ]
\cdot
[ \prod_{ s = 1}^M a^{( s ) }_{ i_{ 2 s }, i_{ \gamma  ( 2 s ) } } ]
\}
\}.
\end{align*}
Induction on $  r $ gives  that $ \mathfrak{ b }_r $ is a sum as above but over $ \pi \in I_2 ( M ) $ such that
$ \pi \vee \gamma^\prime = 1_m $, for  $ \gamma^\prime \in P( M)$ the partition with $ r $ blocks of type
 $( M( k -1) + 1, M( k - 1 ) + 2 , \dots, M( k ) ) $, where $ 1 \leq k \leq r $. In particular, since $ I_2 (M)$ has at most one element, $ \mathfrak{ b }_ r( \Tr(Y_1) \cdots \Tr(Y_r ) ) = 0 $ unless such a pairing exists, that is if $ r = $, then $ l_1 $ is odd, and if $ r \geq 2 $, then  $ l_1 $ and $ l_r $ are odd and 
$ l_2, \dots, l_{ r -1} $ are even.

Let us suppose that there exists $ \pi \in I_2( M )$ satisfying the conditions above. In this case, from the  expansion
(3), we also have that 
\[
 \mathfrak{b}_r ( \Tr(Y_1) \cdots \Tr(Y_r ) ) = \phi ( \Tr(Y_1) \cdots \Tr(Y_r ) ).
\]
Suppose now that  $ l_1 \geq 2 $ and let $ \alpha  = \Tr( Y_2 ) \cdots \Tr( Y_r ) $ (if $ r = 1$, we put $ \alpha = I_N $)
and $ Y_1^\prime = Y \cdot A_2 $, where 
$Y =  B( f_3 ) A_3 \cdots B( f_{ M ( 1 ) }) A_{ M ( 1 ) }  = [ y_{ i, j }]_{ i, j =1}^N$ (if $ l_1 = 2 $, we put $ Y =A_{M(1)}$). 

Then, for $ \overrightarrow{j} = ( j_1, \dots, j_5 ) \in [ N ]^5 $, Proposition \ref{wick:boolean} gives
\begin{align*}
\phi ( \Tr(Y_1) \cdots \Tr(Y_r ) )
&= \sum_{ \overrightarrow{j} }
\phi(
b_{ j_1 j_2 } ( f_ 1 ) a^{ ( 1 ) }_{ j_2 j_3} b_{ j_3 j_4 }(f_2) a^{ ( 2 )}_{ j_4 j_5 } y_{ j_5 j_1 } \cdot \alpha 
)\\
&= \sum_{ \overrightarrow{j} } 
\phi ( b_{ j_1 j_2 } ( f_ 1 )b_{ j_3 j_4 }(f_2)  ) a^{ ( 1 ) }_{ j_2 j_3}
 \phi( y_{ j_5 j_1 }a^{ ( 2 )}_{ j_4 j_5 } \cdot \alpha ).
\end{align*}
Since 
$ \displaystyle \phi ( b_{ j_1 j_2 } ( f_ 1 )b_{ j_3 j_4 }(f_2)  ) 
=\frac{1}{N} \langle f_1, f_2 \rangle \delta_{j_1, j_4 } \delta_{j_2 j_3} $, it follows that
\begin{align}
\phi ( \Tr(Y_1) \cdot\alpha )
&=\sum_{ i, j, k = 1}^N  \langle f_1, f_2 \rangle \frac{1}{N} a^{ (1)}_{ i, i } 
\phi( y_{ j k} a^{( 2 )}_{ k j } \cdot \alpha )\label{eq:10}\\
&  = 
\langle  f_1, f_2 \rangle  \tr(A_1) \phi( \Tr ( Y_1^\prime ) \cdot \alpha ) 
= O(N^0) \phi( \Tr ( Y_1^\prime ) \cdot \alpha ).\nonumber
\end{align}

If $ r = 1$, iterating equation (\ref{eq:10}), we obtain
\begin{equation}
\phi\left( \Tr( B ( f_1 ) A_1 \cdots B( f_{ 2 m } ) A_{ 2 m } ) \right)
=[ \prod_{ k =1}^m 
\langle f_{ 2k -1}, f_{ 2k } \rangle \tr(A_{ 2k -1 } ) ] 
\cdot
\phi( \Tr( \prod_{ j =1}^m A_{ 2 j } ) )
\end{equation}
which implies the theorem for $ r= 1 $.

For $ r \geq 2 $, a similar argument to (\ref{eq:10}) aplied to the last two factors of the type
$ b_{i, j}( f_k ) $ of $ Y_r $ (if $ l_r > 1 $) 
and to the second and third such factors from  $ Y_k $ 
( if $ l_k > 2 $ for $ 2 \leq k \leq r - 1 $)
 gives that  it suffices to prove the theorem for $ l_1 = l_r = 1 $ and 
$ l_2 = \dots = l_{ r - 1}=2$.

Let $ Y_1 = B( f_1) A_1 $ and let $ Y_2 = B(f_2) X $  with $ X = A_2 $ if $ r = 2 $, respectively $ X = A_2 B(f_3) A_3 $ if 
$ r \geq 3 $. put $ \alpha = I_N $ if $ r =2 $, respectively $ \alpha = \Tr(Y_3)\cdots \Tr(Y_r) $ if $ r \geq 3 $.
With this notations, we have that
\begin{align*}
\phi(\Tr(Y_1)\Tr(Y_2) \cdot \alpha) 
&=
\sum_{i, j, k, l =1 }^N 
\phi( b_{ i, j} ( f_1) a^{ ( 1) }_{j, i } b_{ k, l }( f_2) x_{l , k } \cdot \alpha)\\
&=
\sum_{i, j, k, l =1 }^N
\phi( b_{ i, j} ( f_1)b_{ k, l }( f_2) ) \phi ( x_{l , k } a^{ ( 1) }_{j, i }\alpha)
\end{align*}
Since 
$\displaystyle \phi( b_{ i, j} ( f_1)b_{ k, l }( f_2) ) 
 = \frac{1}{N}\langle f_1, f_2 \rangle \delta_{ i, l } \delta_{ j, k } $,
 it follows that
\begin{align}
\phi(\Tr(Y_1)\Tr(Y_2) \cdot \alpha) 
&=\frac{\langle f_1, f_2 \rangle } { N } \sum_{ i, j =1}^N 
\phi( x_{ i, j } a^{ (1) }_{ j, i} \cdot \alpha ) \label{eq:12}\\
&= \frac{\langle f_1, f_2 \rangle } { N } \sum_{ i, j =1}^N 
\phi( \Tr(XA_1) \cdot \alpha ).\nonumber
\end{align}
If $ r = 2 $, equation (\ref{eq:12}) implies that $ \phi(\Tr(Y_1)\Tr(Y_2) = \langle f_1, f_2 \rangle \tr(A_2A_1)=O(N^0)$, and induction on $ r $, using again equation (\ref{eq:12})  gives the result for  $ r \geq 3 $. 
\end{proof}

\bibliographystyle{alpha}

\end{document}